\crefname{hypothesis}{Hypothesis}{Hypotheses}
\Crefname{ALC@unique}{Line}{Lines}
\numberwithin{theorem}{section}
\colorlet{texcscolor}{blue!50!black}
\colorlet{texemcolor}{red!70!black}
\colorlet{texpreamble}{red!70!black}
\colorlet{codebackground}{black!25!white!25}
\lstdefinestyle{siamlatex}{%
  style=tcblatex,
  texcsstyle=*\color{texcscolor},
  texcsstyle=[2]\color{texemcolor},
  keywordstyle=[2]\color{texemcolor},
  moretexcs={cref,Cref,maketitle,mathcal,text,headers,email,url},
}
\DeclareTotalTCBox{\code}{ v O{} }
{ 
  fontupper=\ttfamily\color{black},
  nobeforeafter,
  tcbox raise base,
  colback=codebackground,colframe=white,
  top=0pt,bottom=0pt,left=0mm,right=0mm,
  leftrule=0pt,rightrule=0pt,toprule=0mm,bottomrule=0mm,
  boxsep=0.5mm,
  #2}{#1}
\patchcmd\newpage{\vfil}{}{}{}
\newcommand{\R}{\mathbb{R}}
\DeclarePairedDelimiter{\norm}{\lVert}{\rVert}
\title{Radial Subgradient Method}
\author{Benjamin Grimmer%
  \thanks{Cornell University, Ithaca, NY (\email{bdg79@cornell.edu}).}%
}
\begin{document}
\maketitle

\begin{tcbverbatimwrite}{tmp_\jobname_abstract.tex}
\begin{abstract}
We present a subgradient method for minimizing non-smooth, non-Lipschitz convex optimization problems. The only structure assumed is that a strictly feasible point is known. We extend the work of Renegar \cite{Renegar2016} by taking a different perspective, leading to an algorithm which is conceptually more natural, has notably improved convergence rates, and for which the analysis is surprisingly simple. At each iteration, the algorithm takes a subgradient step and then performs a line search to move radially towards (or away from) the known feasible point. Our convergence results have striking similarities to those of traditional methods that require Lipschitz continuity. Costly orthogonal projections typical of subgradient methods are entirely avoided.
\end{abstract}

\begin{keywords}
  subgradient method, convex optimization, non-Lipschitz optimization
\end{keywords}

\begin{AMS}
  90C25, 90C52, 65K15
\end{AMS}
\end{tcbverbatimwrite}
\input{tmp_\jobname_abstract.tex}

\section{Introduction} \label{sec:Introduction}

We consider the convex optimization problem of minimizing a lower-semicontinuous convex function $f:\R^n \to \R\cup\{\infty\}$ when a point $x_0$ in the interior of the domain of $f$ is known. Importantly, it is not assumed that $f$ is either smooth or Lipschitz.
Note that any constrained convex optimization problem fits under this model by setting the function value of all infeasible points to infinity (provided the feasible region is closed and convex, and a point exists in the interior of both the feasible region and the domain of $f$). This model can easily be extended to allow affine constraints by considering the relative interior of the domain instead.

Without loss of generality, we have $x_0=\vec 0\in\text{int dom} f$ and $f(\vec 0) < 0$, which can be achieved by replacing $f(x)$ by $f(x+x_0)-f(x_0)-h$ (for any positive constant $h>0$). Let $f^* = \inf\{f(x)\}$ denote the function's minimum value (which equals $-\infty$ if the function is unbounded below). 

This general minimization problem has been the focus of a recent paper by Renegar \cite{Renegar2016}. Renegar develops a framework for converting the original non-Lipschitz problem into an equivalent Lipschitz problem, in a slightly lifted space. This transformation is geometric in nature and applied to a conic reformulation of the original problem.
By applying a subgradient approach to the reformulated problem, Renegar achieves convergence bounds analogous to those of traditional methods that assume Lipschitz continuity. 

One difference in Renegar's approach is that it guarantees relative accuracy of a solution rather than absolute accuracy. A point $x$ has absolute accuracy of $\epsilon$ if $f(x) - f^* < \epsilon$. In contrast, a point $x$ has relative accuracy of $\epsilon$ if $(f(x) - f^*)/(0-f^*) < \epsilon$. Note that here we measure error relative to an objective value of $0$ since we assume $f(x_0)<0$. This relative accuracy can be interpreted as a multiplicative accuracy through simple algebraic rearrangement:
$$ \frac{f(x) - f^*}{0-f^*} < \epsilon \Longleftrightarrow f(x) < f^* (1-\epsilon).$$

We take a different perspective on the transformation underlying Renegar's framework. Instead of first converting the convex problem into conic form in a lifted space, we define an equivalent transformation directly in terms of the original function.

Before stating our function-oriented transformation, we define the {\it perspective function} of $f$ for any $\gamma>0$ as $f^p(x, \gamma) = \gamma f(x/\gamma)$. Note that perspective functions have been studied in a variety of other contexts. Two interesting recent works have considered optimization problems where perspective functions occur naturally. In \cite{Combettes2016}, Combettes and M{\"u}ller investigate properties of the proximity operator of a perspective function. In \cite{Aravkin2017}, Aravkin et al.\ 
construct a duality framework based on gauges, which extends to perspective functions.

Based on the perspective function of $f$, our functional version of Renegar's transformation is defined as follows.

\begin{definition} \label{def:Radial-Reformulation}
The {\it radial reformulation} of $f$ of level $z\in\R$ is given by
$$\gamma_z(x) = \inf\left\{\gamma > 0 \mid f^p(x,\gamma) \leq z\right\}.$$
\end{definition}
As a simple consequence of Renegar's transformation converting non-Lipschitz problems into Lipschitz ones, the radial reformulation of any convex function is both convex and Lipschitz (shown in Lemma \ref{lem:Lambda-To-Radial-Reformulation} and Proposition \ref{prop:Lipschitz-Of-Radial-Reformulation}). As a result, the radial reformulation is amenable to the application of traditional subgradient descent methods, even if $f$ is not.

We present modified versions of Renegar's algorithms which are both conceptually simpler and achieve notably improved convergence bounds. We elaborate on the relationship between these algorithms in Section \ref{subsec:Renegars-Results}. Let $\partial \gamma_z(x)$ denote the set of subgradients of $\gamma_z(\cdot)$ at $x$.
Then our algorithm is stated in Algorithm \ref{alg:Radial-Subgradient-Method} with step sizes given by a positive sequence $\{\alpha_i\}$.

\begin{algorithm}
\caption{Radial Subgradient Method}
\label{alg:Radial-Subgradient-Method}
\begin{algorithmic}[1]
\STATE $x_0=\vec 0$, $z_{0}=f(x_0)$
\FOR{$i = 0, 1, \dots$}
   \STATE Select $\zeta_i \in \partial \gamma_{z_{i}}(x_i)$ \hfill \COMMENT{Pick a subgradient of $\gamma_{z_{i}}(\cdot)$}
   \STATE $\tilde x_{i+1} = x_i - \alpha_i \zeta_i$ \hfill \COMMENT{Move in the subgradient direction}
   \STATE {\bf if } $\gamma_{z_{i}}(\tilde x_{i+1}) =0$ {\bf then } report unbounded objective and terminate.
   \STATE $(x_{i+1},z_{i+1}) = \dfrac{1}{\gamma_{z_{i}}(\tilde x_{i+1})}(\tilde x_{i+1},z_{i})$
 \hfill \COMMENT{Radially update current solution}
\ENDFOR
\end{algorithmic}
\end{algorithm}

Critical to the strength of our results is the specification of the initial iterate, $x_0 = \vec 0$.
Each iteration of this algorithm takes a subgradient step (with respect to a radial reformulation $\gamma_{z_{i}}(\cdot)$ of changing level) and then moves the resulting point radially towards (or away from) the origin. This radial movement ensures that each iterate $x_i$ lies in the domain of $f$.

Notably, this algorithm completely avoids computing orthogonal projections. Renegar's algorithms also have this computational advantage.
Evaluating $\gamma_z(\cdot)$ only requires a line search, which can be viewed as a radial projection. Further, the subgradients of $\gamma_z(\cdot)$ can be easily computed from normal vectors of the epigraph of $f$ (shown in Proposition \ref{prop:Subgradients-Of-Radial}, which follows directly from an equivalent characterization given in \cite{Renegar2016}). This can lead to substantial improvements in runtime over projected subgradient methods, which require costly orthogonal projections every iteration.
We defer a deeper discussion of this improvement in iteration cost to \cite{Renegar2016}.

We first give a general convergence guarantee for the Radial Subgradient Method, where $\mathrm{dist}(x_0, X)$ denotes the minimum distance from $x_0=\vec 0$ to a set $X$. As a consequence, we find that proper selection of the step size will guarantee a subsequence of the iterates has objective values converging to the optimal value.

Let $\langle\cdot,\cdot\rangle$ denote the inner product on $\R^n$ with associated norm $\norm{\cdot}$.
Define $R = \sup\{r\in\R \mid f(x)\leq 0 \text{ for all } x \text{ with } \norm{x}\leq r\}$. This scalar only appears in convergence bounds, but is never assumed to be known.

\begin{theorem} \label{thm:Convergence-In-General}
Let $\{x_i\}$ be the sequence generated by Algorithm \ref{alg:Radial-Subgradient-Method} with steps sizes $\alpha_i$. Consider any $\hat{f}<0$ with nonempty level set $\hat{X} = \{ x \mid f(x) = \hat{f} \}$. Then for any $k\geq 0$, some iteration $i\leq k$ either identifies that $f$ is unbounded on the ray $\{t \tilde x_{i+1} \mid t>0\}$ or has
\begin{equation*}
\frac{f(x_i) - \hat{f}}{0 -f(x_i)} \leq \frac{\mathrm{dist}(x_0,\hat{X})^2 + \frac{1}{R^2}\sum^k_{j=0}\alpha_j^2\left(\frac{\hat{f}}{z_{j}}\right)^2}{2\sum^k_{j=0}\alpha_j\frac{\hat{f}}{z_{j}}}.
\end{equation*}
\end{theorem}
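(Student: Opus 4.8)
The plan is to run the textbook subgradient-method potential argument on the reformulations $\gamma_{z_i}$, but with a \emph{moving} comparison point that is rescaled in lockstep with the iterates so that the radial update can be absorbed rather than fought.

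First I would record two invariants that make the quantities in the bound tractable. Writing $g_i := \gamma_{z_i}(\tilde x_{i+1})$ for the line-search value, the update sets $x_{i+1} = \tilde x_{i+1}/g_i$ and $z_{i+1} = z_i/g_i$, so the defining identity $g_i\, f(\tilde x_{i+1}/g_i) = z_i$ of the radial reformulation gives $f(x_{i+1}) = z_i/g_i = z_{i+1}$; by induction $f(x_i) = z_i$ for all $i$, and every $z_i < 0$ since $z_0 < 0$ and each $g_i > 0$. The same identity at $\gamma = 1$ gives $\gamma_{z_i}(x_i) = 1$. Writing $\rho_i := \hat f/z_i > 0$, the target quantity is exactly $\frac{f(x_i) - \hat f}{0 - f(x_i)} = \rho_i - 1 =: e_i$, so it suffices to bound $\min_{i\le k} e_i$. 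Finally I would invoke Proposition \ref{prop:Lipschitz-Of-Radial-Reformulation} in the form $\norm{\zeta_i}\le 1/R$.

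Next, fix a nearest point $\hat x\in\hat X$ with $\norm{\hat x} = \mathrm{dist}(x_0,\hat X)$ and set the comparison points $y_i := (z_i/\hat f)\,\hat x$. Evaluating the perspective at $\gamma = z_i/\hat f$ shows $\gamma_{z_i}(y_i) = z_i/\hat f = \rho_i^{-1}$, and crucially the radial update scales $y$ exactly as it scales $x$, i.e.\ $y_{i+1} = y_i/g_i$. Then the subgradient inequality $\langle\zeta_i, x_i - y_i\rangle \ge \gamma_{z_i}(x_i) - \gamma_{z_i}(y_i) = 1 - \rho_i^{-1} = e_i/\rho_i$ combined with the expansion of $\norm{x_i - y_i - \alpha_i\zeta_i}^2$ yields
$$g_i^2\norm{x_{i+1} - y_{i+1}}^2 = \norm{\tilde x_{i+1} - y_i}^2 \le \norm{x_i - y_i}^2 - 2\alpha_i\frac{e_i}{\rho_i} + \alpha_i^2\norm{\zeta_i}^2,$$
where the first equality uses $x_{i+1} - y_{i+1} = (\tilde x_{i+1} - y_i)/g_i$.

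The main obstacle is the stray factor $g_i^2$ produced by the radial rescaling, which blocks a direct telescoping of $\norm{x_i - y_i}^2$. I would kill it with the weights $c_i := \prod_{j<i} g_j^2 = (z_0/z_i)^2$ (the product telescopes since $g_j = z_j/z_{j+1}$), which satisfy $c_{i+1} = c_i g_i^2$. Multiplying the displayed inequality by $c_i$ turns it into $c_{i+1}\norm{x_{i+1}-y_{i+1}}^2 \le c_i\norm{x_i - y_i}^2 - 2c_i\alpha_i e_i/\rho_i + c_i\alpha_i^2\norm{\zeta_i}^2$, which telescopes cleanly. Summing over $j = 0,\dots,k$, dropping the nonnegative terminal term, and using $c_j/\rho_j = (z_0^2/\hat f^2)\rho_j$, the bound $c_j\norm{\zeta_j}^2 \le c_j/R^2 = (z_0^2/\hat f^2)\rho_j^2/R^2$, and $c_0\norm{x_0 - y_0}^2 = (z_0^2/\hat f^2)\,\mathrm{dist}(x_0,\hat X)^2$, every term carries the common factor $z_0^2/\hat f^2$; cancelling it leaves
$$2\sum_{j=0}^k \alpha_j\rho_j e_j \le \mathrm{dist}(x_0,\hat X)^2 + \frac{1}{R^2}\sum_{j=0}^k\alpha_j^2\rho_j^2.$$
Since $\min_{i\le k} e_i \cdot \sum_j \alpha_j\rho_j \le \sum_j\alpha_j\rho_j e_j$ (each $\alpha_j\rho_j > 0$), dividing by $2\sum_j\alpha_j\rho_j$ and recalling $\rho_j = \hat f/z_j$ gives the claimed bound at the index attaining $\min_i e_i$. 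This chain assumes iterations $0,\dots,k$ all completed with $g_i > 0$; if some $g_i = 0$ occurs at $i \le k$, that is exactly the reported-unbounded alternative, and any iterate with $e_i \le 0$ is covered trivially since the right-hand side is nonnegative.
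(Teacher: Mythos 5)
Your argument is, after cancelling the common factor $z_0^2/\hat f^2$, identical to the paper's: your weighted potential $c_i\norm{x_i-y_i}^2$ with $c_i=(z_0/z_i)^2$ is exactly the paper's telescoping quantity $\norm{\tfrac{\hat f}{z_i}x_i-\hat x}^2$ up to that constant, your comparison point $y_i=(z_i/\hat f)\hat x$ is the one used in Lemma \ref{lem:Modified-Subgradient-Inequality}, and the summed inequality you reach is precisely \eqref{eq:Summed-Up-Inequality}. So the route is the same; the issue is with how you justify the two invariants you lean on at the start.

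You assert that ``the defining identity $g_i\,f(\tilde x_{i+1}/g_i)=z_i$'' holds, i.e.\ that the infimum defining $\gamma_{z_i}(\tilde x_{i+1})$ is attained with equality, and from it you deduce both $f(x_{i+1})=z_{i+1}$ and (``the same identity at $\gamma=1$'') $\gamma_{z_{i+1}}(x_{i+1})=1$. For a general lower-semicontinuous $f$ this equality can fail: lower semicontinuity only gives $f^p(\tilde x_{i+1},\gamma_{z_i}(\tilde x_{i+1}))\le z_i$, and strict inequality occurs when $\gamma_{z_i}(\tilde x_{i+1})$ sits at the boundary of $\{\gamma>0: \tilde x_{i+1}/\gamma\in\mathrm{dom}\,f\}$ (e.g.\ $f=-1$ on $[-1,1]$ and $+\infty$ outside, $x=1$, $z=-1/2$ gives $\gamma_z(x)=1$ but $f^p(x,1)=-1<z$). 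This is why Lemma \ref{lem:Ordering-Of-Values} only claims $f(x_{i+1})\le z_{i+1}$. The consequence $f(x_i)\le z_i$ is harmless for your final step, since $u\mapsto (u-\hat f)/(-u)$ is increasing on $u<0$ and so the target quantity is at most your $e_i$. But the invariant $\gamma_{z_i}(x_i)=1$ is load-bearing (it is what makes the subgradient inequality produce $1-\rho_i^{-1}$), and your derivation of it via ``$f^p(x_i,1)=z_i$ plus strict monotonicity'' collapses when $f(x_i)<z_i$. The correct justification is the positive-homogeneity computation of Lemma \ref{lem:Identity-For-Radial-Reformulation}: $\gamma_{z_k/g_k}(\tilde x_{k+1}/g_k)=\gamma_{z_k}(\tilde x_{k+1})/g_k=1$, which needs no attainment of the infimum. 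With that substitution (and the two equalities weakened to the inequalities that actually hold) your proof is complete and matches the paper's.
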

\begin{corollary} \label{cor:Convergence-Square-Summable}
Consider any positive sequence $\{\beta_i\}$ with $\sum^\infty_{i=0}\beta_i = \infty$ and $\sum^\infty_{i=0}\beta_i^2 < \infty$. 
Let $\{x_i\}$ be the sequence generated by Algorithm \ref{alg:Radial-Subgradient-Method} with $\alpha_i = -z_{i}\beta_i$. Then either some iteration identifies that $f$ is unbounded on the ray $\{t \tilde x_{i+1} \mid t>0\}$ or
\begin{equation*}
\lim_{k\to\infty}\min_{i\leq k}\left\{f(x_i)\right\} = f^*.
\end{equation*}
\end{corollary}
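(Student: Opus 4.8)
The plan is to reduce everything to Theorem~\ref{thm:Convergence-In-General} by substituting the chosen step sizes and then letting $k\to\infty$. If some iteration ever reports an unbounded objective, the first alternative in the corollary holds and there is nothing to prove, so I assume this never happens. Since $z_0=f(x_0)<0$ and each radial update divides by the positive quantity $\gamma_{z_i}(\tilde x_{i+1})$, one has $z_i<0$ for every $i$; moreover the active constraint defining $\gamma_{z_i}(\tilde x_{i+1})$ gives $f(x_i)=z_i<0$. In particular the step sizes $\alpha_i=-z_i\beta_i$ are positive as required, and the denominators below are known to be positive.

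First I would plug $\alpha_j=-z_j\beta_j$ into the bound of Theorem~\ref{thm:Convergence-In-General}. The key observation is that the factors of $z_j$ cancel exactly: $\alpha_j^2(\hat f/z_j)^2=\beta_j^2\hat f^2$ and $\alpha_j(\hat f/z_j)=-\beta_j\hat f$. Fixing any admissible $\hat f<0$ (one with nonempty level set), the theorem then guarantees, for each $k$, an iterate $i_k\le k$ with
$$\frac{f(x_{i_k})-\hat f}{-f(x_{i_k})}\;\le\; B_k:=\frac{\mathrm{dist}(x_0,\hat X)^2+\frac{\hat f^2}{R^2}\sum_{j=0}^k\beta_j^2}{-2\hat f\sum_{j=0}^k\beta_j}.$$
Because $\sum_{j}\beta_j^2<\infty$ while $\sum_{j}\beta_j=\infty$, the numerator stays bounded and the denominator diverges, so $B_k\to 0$.

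Next I would convert this relative bound into an absolute one. Writing $y=f(x_{i_k})<0$ and clearing the positive denominator $-y$ yields $y(1+B_k)\le\hat f$, i.e. $f(x_{i_k})\le \hat f/(1+B_k)$. Hence $\min_{i\le k}f(x_i)\le \hat f/(1+B_k)$, and since the left-hand side is nonincreasing in $k$ with limit $L:=\lim_k\min_{i\le k}f(x_i)$ while the right-hand side tends to $\hat f$, I conclude $L\le\hat f$ for every admissible $\hat f$. The reverse inequality $L\ge f^*$ is immediate from the definition of $f^*$.

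It remains to push $\hat f$ down to $f^*$, which is the one place requiring care. If $f^*=f(0)$ then $L\le f(x_0)=f^*$ already, since $x_0$ is an iterate. Otherwise, for any $\hat f$ with $f^*<\hat f<f(0)$ I would exhibit a point of $\{f=\hat f\}$: choose $\bar x$ with $f(\bar x)<\hat f$ and apply the intermediate value theorem to the real-valued convex function $t\mapsto f(t\bar x)$ on $[0,1]$, which runs from $f(0)>\hat f$ to $f(\bar x)<\hat f$ and is continuous on $(0,1)$. Thus every such $\hat f$ is admissible, so $L\le\hat f$ for all $\hat f>f^*$, giving $L\le f^*$ and hence $L=f^*$. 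The case $f^*=-\infty$ is handled the same way (using the direct bound via $x_0$ when $\hat f\ge f(0)$), yielding $L=-\infty=f^*$. The main obstacle is this last step, namely guaranteeing nonempty level sets arbitrarily close to $f^*$; the substitution and the limit $B_k\to 0$ are routine.
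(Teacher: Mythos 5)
Your proof is correct and follows essentially the same route as the paper: substitute $\alpha_i=-z_i\beta_i$ into Theorem~\ref{thm:Convergence-In-General} so that the bound tends to zero, then let $\hat f$ decrease to $f^*$; you additionally verify (via the intermediate value argument on $t\mapsto f(t\bar x)$) that the level sets $\{f=\hat f\}$ are nonempty for $\hat f\in(f^*,f(\vec 0))$, a point the paper leaves implicit. One small erratum: your side claim that $f(x_i)=z_i$ is in general only $f(x_i)\le z_i$ (Lemma~\ref{lem:Ordering-Of-Values}), since for merely lower-semicontinuous $f$ the infimum defining $\gamma_{z_i}$ need not be attained with equality, but your argument only uses $f(x_i)<0$, so nothing breaks.
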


Although the choice of step size in Corollary \ref{cor:Convergence-Square-Summable} guarantees that our algorithm converges, it does not provide any bounds on the rate of convergence.
We bound the convergence rate of the Radial Subgradient Method in two cases: when a target accuracy $\epsilon>0$ is given and when the optimal objective value $f^*$ is given. The resulting convergence bounds for two particular choices of step sizes are stated in the following theorems. 

\begin{theorem} \label{thm:Convergence-When-Optimal-Unknown}
Consider any $\epsilon>0$ and $\hat{f}<0$ with nonempty level set $\hat{X} = \{ x \mid f(x) = \hat{f} \}$. Let $\{x_i\}$ be the sequence generated by Algorithm \ref{alg:Radial-Subgradient-Method} with $\alpha_i = \dfrac{\epsilon}{2\norm{\zeta_i}^2}$. Then some iteration
$$i \leq \left\lceil \frac{4}{3}  \frac{  \mathrm{dist}(x_0, \hat{X})^2}{R^2} \frac{1}{\epsilon^2}  \right\rceil$$
either identifies that $f$ is unbounded on the ray $\{t \tilde x_{i+1} \mid t>0\}$ or has 
$$\frac{f(x_i) - \hat{f}}{0 -\hat{f}} \leq \epsilon.$$
\end{theorem}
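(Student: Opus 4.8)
Rather than invoke the aggregated bound of Theorem~\ref{thm:Convergence-In-General} verbatim (doing so here would require controlling $\norm{\zeta_j}$ from \emph{both} sides), the plan is to reuse its engine---the one-step contraction---after passing to the rescaled iterates
\[
u_j := w_j\, x_j, \qquad w_j := \hat f / z_j > 0,
\]
where I use the invariant $f(x_j)=z_j<0$ maintained by the radial update. Combining the subgradient step $\tilde x_{j+1}=x_j-\alpha_j\zeta_j$ with the radial/level update $z_{j+1}=z_j/\gamma_{z_j}(\tilde x_{j+1})$ one finds $w_{j+1}=w_j\,\gamma_{z_j}(\tilde x_{j+1})$, so the two moves collapse into a single ordinary subgradient step toward a \emph{fixed} target, $u_{j+1}=u_j-w_j\alpha_j\zeta_j$, with $u_0=\vec 0$. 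Fixing $x^*\in\hat X$ with $\norm{x^*}=\mathrm{dist}(x_0,\hat X)$, I would argue by contradiction: if some iteration $i\le k$ reports an unbounded ray we are done, so assume none does and that every iterate $j\le k$ fails, i.e.\ $\tau_j:=\frac{f(x_j)-\hat f}{0-\hat f}>\epsilon$, where $k=\big\lceil\frac43\frac{\mathrm{dist}(x_0,\hat X)^2}{R^2}\frac1{\epsilon^2}\big\rceil$.

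For the one-step estimate, homogeneity of the radial reformulation (Lemma~\ref{lem:Lambda-To-Radial-Reformulation}) together with $f(x^*)=\hat f$ gives $\gamma_{z_j}(x^*/w_j)=1/w_j$, whence the subgradient inequality for $\zeta_j\in\partial\gamma_{z_j}(x_j)$ yields $\langle \zeta_j, u_j-x^*\rangle \ge w_j-1$. Expanding $\norm{u_{j+1}-x^*}^2$, using the Lipschitz bound $\norm{\zeta_j}\le 1/R$ (Proposition~\ref{prop:Lipschitz-Of-Radial-Reformulation}), and substituting $\alpha_j=\epsilon/(2\norm{\zeta_j}^2)$ gives
\[
\norm{u_j-x^*}^2-\norm{u_{j+1}-x^*}^2 \ \ge\ \frac{\epsilon\, w_j}{\norm{\zeta_j}^2}\Big[(w_j-1)-\tfrac{\epsilon}{4}w_j\Big] \ \ge\ \epsilon R^2 w_j\Big[(w_j-1)-\tfrac{\epsilon}{4}w_j\Big].
\]
The crucial feature is that this step size leaves $1/\norm{\zeta_j}^2$ as a single positive common factor, so only the \emph{upper} bound $\norm{\zeta_j}\le 1/R$ is used; no lower bound on $\norm{\zeta_j}$ is needed.

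Using $f(x_j)=z_j$ to write $\tau_j=1-1/w_j$, a short calculation turns the bracketed quantity into $w_j[(w_j-1)-\tfrac\epsilon4 w_j]=(\tau_j-\tfrac\epsilon4)/(1-\tau_j)^2$, which is increasing in $\tau_j$ and therefore, since $\tau_j>\epsilon$, exceeds $\tfrac{3\epsilon/4}{(1-\epsilon)^2}\ge\tfrac34\epsilon$. Hence every iteration shrinks $\norm{u_j-x^*}^2$ by at least $\tfrac34\epsilon^2R^2$. Telescoping from $0$ to $k$ and discarding $\norm{u_{k+1}-x^*}^2\ge0$ gives $(k+1)\tfrac34\epsilon^2R^2\le \norm{u_0-x^*}^2=\mathrm{dist}(x_0,\hat X)^2$, i.e.\ $k+1\le\frac43\frac{\mathrm{dist}(x_0,\hat X)^2}{R^2\epsilon^2}\le k$, which is absurd. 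Therefore some $i\le k$ has $\tau_i\le\epsilon$, which is exactly the claimed bound.

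I expect the main obstacle to be the one-step contraction in the rescaled coordinates: verifying that the subgradient-plus-radial step is a plain subgradient step toward the fixed point $x^*$, and establishing $\langle\zeta_j,u_j-x^*\rangle\ge w_j-1$ via the homogeneity identity $\gamma_{z_j}(x^*/w_j)=1/w_j$. The second, more conceptual, point is recognizing that the prescribed step size makes the crude one-sided estimate $\norm{\zeta_j}\le 1/R$ sufficient---this is precisely what yields the sharp constant $\tfrac43$, via the minimization of $(\tau_j-\tfrac\epsilon4)/(1-\tau_j)^2$ at $\tau_j=\epsilon$. Minor care is needed to dispose of the unbounded-termination case ($\gamma_{z_j}(\tilde x_{j+1})=0$) before the contraction is applied.
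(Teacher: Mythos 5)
Your argument is essentially the paper's own proof, reorganized: your rescaled iterates $u_j=(\hat f/z_j)x_j$ and the one-step contraction $u_{j+1}=u_j-w_j\alpha_j\zeta_j$ toward the fixed target, established via $\gamma_{z_j}(x_j)=1$ and $\gamma_{z_j}\bigl(\tfrac{z_j}{\hat f}x^*\bigr)=\tfrac{z_j}{\hat f}$, are exactly Lemma~\ref{lem:Modified-Subgradient-Inequality}; the paper then telescopes to inequality~(\ref{eq:Summed-Up-Inequality}), substitutes $\alpha_i=\epsilon/(2\norm{\zeta_i}^2)$, and bounds the minimum term, which is the same computation you run as a per-step contradiction. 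Your observation that the step size leaves $1/\norm{\zeta_j}^2$ as a common positive factor, so only $\norm{\zeta_j}\le 1/R$ is needed, matches the paper precisely.

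One correction is needed: the ``invariant'' $f(x_j)=z_j$ is not maintained by the algorithm. Lemma~\ref{lem:Ordering-Of-Values} only gives $f(x_j)\le z_j$, and equality can genuinely fail when the infimum defining $\gamma_{z_j}$ is attained at the boundary of $\operatorname{dom} f$ (e.g.\ $f=-1$ on the unit ball, $+\infty$ outside). Consequently your identity $w_j[(w_j-1)-\tfrac\epsilon4 w_j]=(\tau_j-\tfrac\epsilon4)/(1-\tau_j)^2$ is false in general; the bracketed quantity equals $(\sigma_j-\tfrac\epsilon4)/(1-\sigma_j)^2$ with $\sigma_j:=\tfrac{z_j-\hat f}{0-\hat f}\ge\tau_j$. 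The proof survives unchanged because under your contradiction hypothesis $\tau_j>\epsilon$ one still has $\sigma_j>\epsilon$, and the expression is increasing in $\sigma_j$ on the relevant range, so the lower bound $\tfrac34\epsilon$ persists---but you should state it this way rather than via the false equality. (Also, the identity $\gamma_{z_j}(x^*/w_j)=1/w_j$ follows from $f^p$ being strictly decreasing in $\gamma$, as in the proof of Lemma~\ref{lem:Modified-Subgradient-Inequality}, not from Lemma~\ref{lem:Lambda-To-Radial-Reformulation}.)
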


\begin{theorem} \label{thm:Convergence-When-Optimal-Known}
Suppose $f^*$ is finite and attained at some set of points $X^*$. Let $\{x_i\}$ be the sequence generated by Algorithm \ref{alg:Radial-Subgradient-Method} with $\alpha_i = \dfrac{z_{i} - f^*}{0-f^*} \dfrac{1}{\norm{\zeta_i}^2}$. Then for any $\epsilon>0$, some iteration
$$i \leq \left\lceil \frac{  \mathrm{dist}(x_0, X^*)^2}{R^2} \frac{1}{\epsilon^2}  \right\rceil \hskip0.4cm \text{ has } \hskip0.4cm \frac{f(x_i) - f^*}{0 - f^*} \leq \epsilon.$$
\end{theorem}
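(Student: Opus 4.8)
The plan is to specialize Theorem~\ref{thm:Convergence-In-General} to the comparison level $\hat f = f^*$ and to exploit the fact that the prescribed step size is exactly the Polyak step for the surrogate being minimized. Taking $\hat f = f^*$ is legitimate: $f^*$ is finite and attained, so $\hat X = X^*$ is nonempty, and $f^* \le f(\vec 0) < 0$, so $\hat f < 0$. Moreover, since $f^*$ is finite the objective is bounded below on every ray, so $\gamma_{z_i}(\tilde x_{i+1}) > 0$ throughout and the algorithm never reports unboundedness; this is why Theorem~\ref{thm:Convergence-When-Optimal-Known} carries no unboundedness alternative. Throughout I use the basic invariants of the iterates established in proving the earlier results: each iterate satisfies $z_i = f(x_i)$ and $f^* \le z_i < 0$. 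Writing $\rho_i = \frac{f(x_i)-f^*}{0-f^*}$ for the relative accuracy I want to bound and $g_i = \frac{f(x_i)-f^*}{0-f(x_i)}$ for the quantity controlled by Theorem~\ref{thm:Convergence-In-General}, the inequalities $0 < -f(x_i) \le -f^*$ give $\rho_i \le g_i$; hence it suffices to force $g_i \le \epsilon$ at some early iterate.

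First I would reuse the per-iteration estimate underlying Theorem~\ref{thm:Convergence-In-General}, namely
\begin{equation*}
\mathrm{dist}(x_{i+1}, X^*)^2 \le \mathrm{dist}(x_i, X^*)^2 - 2\alpha_i \frac{f^*}{z_i} g_i + \alpha_i^2 \norm{\zeta_i}^2\left(\frac{f^*}{z_i}\right)^2,
\end{equation*}
which is exactly what yields the stated ratio after summing, telescoping, and bounding $\norm{\zeta_i} \le 1/R$. The crucial point is that here I keep $\norm{\zeta_i}$ exact rather than bounding it immediately. A direct computation with the prescribed $\alpha_i = \frac{z_i-f^*}{0-f^*}\frac{1}{\norm{\zeta_i}^2}$ shows the effective step equals $\alpha_i\frac{f^*}{z_i} = \frac{g_i}{\norm{\zeta_i}^2}$, whereupon the last two terms combine into a single square:
\begin{equation*}
\mathrm{dist}(x_{i+1}, X^*)^2 \le \mathrm{dist}(x_i, X^*)^2 - \frac{g_i^2}{\norm{\zeta_i}^2}.
\end{equation*}
This is the familiar exact cancellation produced by a Polyak step.

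Next I would telescope from $i=0$ to $k$, discard $\mathrm{dist}(x_{k+1}, X^*)^2 \ge 0$, and obtain $\sum_{i=0}^k \frac{g_i^2}{\norm{\zeta_i}^2} \le \mathrm{dist}(x_0, X^*)^2$. Invoking the uniform Lipschitz bound $\norm{\zeta_i} \le 1/R$ from Proposition~\ref{prop:Lipschitz-Of-Radial-Reformulation} gives $R^2 \sum_{i=0}^k g_i^2 \le \mathrm{dist}(x_0, X^*)^2$, so the smallest gap satisfies $\min_{i\le k} g_i \le \frac{\mathrm{dist}(x_0,X^*)}{R\sqrt{k+1}}$. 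Choosing $k = \lceil \frac{\mathrm{dist}(x_0,X^*)^2}{R^2}\frac{1}{\epsilon^2}\rceil$ makes $\sqrt{k+1} > \frac{\mathrm{dist}(x_0,X^*)}{R\epsilon}$, hence $\min_{i\le k} g_i \le \epsilon$; the corresponding iterate then has $\rho_i \le g_i \le \epsilon$, which is the claim.

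The main obstacle is the per-iteration estimate itself: correctly accounting for the radial rescaling and the simultaneously changing level $z_i$, and establishing the uniform subgradient bound $\norm{\zeta_i}\le 1/R$ (where the ball-of-radius-$R$ definition of $R$ enters). It is worth stressing that the plain ratio form of Theorem~\ref{thm:Convergence-In-General} is too lossy here: substituting the Polyak step into that ratio and then bounding $\norm{\zeta_i}$ does not collapse, since controlling the numerator would require a \emph{lower} bound on $\norm{\zeta_i}$. Keeping $\norm{\zeta_i}$ exact in the per-iteration inequality and only invoking $\norm{\zeta_i}\le 1/R$ after the Polyak cancellation is precisely what makes the argument go through and produces the clean iteration count.
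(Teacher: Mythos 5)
Your proposal is correct and follows essentially the same route as the paper: specialize the telescoped inequality to $\hat f = f^*$, observe that the prescribed step is exactly a Polyak step so the cross term and the squared term collapse into a single negative square, and only afterwards invoke $\norm{\zeta_i}\le 1/R$ before taking the minimum over $i\le k$. The one imprecision is your assertion that $z_i = f(x_i)$ — Lemma~\ref{lem:Ordering-Of-Values} only gives $f^*\le f(x_i)\le z_i<0$, and the quantity that the Polyak step exactly cancels is $\tfrac{z_i-f^*}{0-z_i}$ rather than your $g_i$ — but since $0\le g_i\le \tfrac{z_i-f^*}{0-z_i}$, every inequality in your chain still holds and the argument is unaffected.
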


These convergence bounds are remarkably simple and should have fairly small constants in practice. 
In Section \ref{subsec:Renegars-Results}, we compare our results to those of Renegar in \cite{Renegar2016}. Then in Section \ref{sec:Preliminaries}, we establish a number of relevant properties of our radial reformulation, which follow from the equivalent structures in Renegar's framework. Finally in Section \ref{sec:Analysis-Of-Convergence}, we prove our main results on the convergence of the Radial Subgradient Method.

\subsection{Comparison to Previous Results} \label{subsec:Renegars-Results}
Renegar presents two subgradient algorithms (referred to as Algorithms A and B in \cite{Renegar2016}), which we paraphrase below in terms of our function-oriented transformation as Algorithms \ref{alg:Renegar-Unknown-Optimal} and \ref{alg:Renegar-Known-Optimal}. Algorithms \ref{alg:Renegar-Unknown-Optimal} assumes a target accuracy $\epsilon>0$ is given as input. Algorithm \ref{alg:Renegar-Known-Optimal} assumes the optimal objective value $f^*$ is given as input.

Renegar's analysis of these algorithms requires two additional assumptions on $f$ beyond the basic assumptions we make of lower-semicontinuity and convexity. In particular, they further assume that the minimum objective value is finite and attained at some point, and that the set of optimal solutions is bounded.

\begin{algorithm}
\caption{Function-oriented statement of Algorithm A in \cite{Renegar2016}}
\label{alg:Renegar-Unknown-Optimal}
\begin{algorithmic}[1]
\STATE $x_0=\vec 0$, $z_{0}=f(x_0)$
\FOR{$i = 0, 1, \dots$}
   \STATE Select $\zeta_i \in \partial \gamma_{z_{i}}(x_i)$
   \STATE $x_{i+1} = x_i - \alpha_i \zeta_i$, where $\alpha_i=\dfrac{\epsilon}{2\norm{\zeta_i}^2}$
   \STATE $z_{i+1} = z_i$
   \STATE {\bf if } $\gamma_{z_{i+1}}(x_{i+1}) \leq 3/4$ {\bf then } $(x_{i+1},z_{i+1}) := \dfrac{1}{\gamma_{z_{i+1}}(x_{i+1})}(x_{i+1},z_{i+1})$
\ENDFOR
\end{algorithmic}
\end{algorithm}
\begin{algorithm}
\caption{Function-oriented statement of Algorithm B in \cite{Renegar2016}}
\label{alg:Renegar-Known-Optimal}
\begin{algorithmic}[1]
\STATE $x_0=\vec 0$, $z=f^*$
\FOR{$i = 0, 1, \dots$}
   \STATE Select $\zeta_i \in \partial \gamma_{z}(x_i)$
   \STATE $x_{i+1} = x_i - \alpha_i \zeta_i$, where $\alpha_i=\dfrac{\gamma_z(x_i)-1}{\norm{\zeta_i}^2}$
\ENDFOR
\end{algorithmic}
\end{algorithm}

Notice that Algorithm \ref{alg:Renegar-Unknown-Optimal} only does a radial update when the threshold $\gamma_{z}(x) \leq 3/4$ is met and Algorithm \ref{alg:Renegar-Known-Optimal} never does radial updates. This contrasts with our Radial Subgradient Method which does updates every iteration.
Recently in \cite{Freund2016}, Freund and Lu presented an interesting approach to first-order optimization with similarities to Renegar's approach. Their method utilizes a similar thresholding condition for doing periodic updates.

As previously mentioned, Renegar's algorithms completely avoid computing orthogonal projections. As a result, these algorithms can have substantially lower per iteration cost than traditional subgradient methods reliant on orthogonal projections.

Define $D$ to be the diameter of the sublevel set $\{x \mid f(x)\leq f(\vec 0)\}$. Like $R$, this scalar only appears in convergence bounds, but is never assumed to be known. Note that the set of optimal solutions must be bounded for $D$ to be finite. Then Renegar proves the following convergence bounds.
 
\begin{theorem}[Renegar \cite{Renegar2016}, Theorem 1.1] \label{thm:Renegar-Convergence-When-Optimal-Unknown}
Consider any $\epsilon>0$. If $\{x_i\}$ is the sequence generated by Algorithm \ref{alg:Renegar-Unknown-Optimal}, then some iteration
\begin{equation*}
i \leq \left\lceil 8 \left(\frac{D}{R}\right)^2 \left(\frac{1}{\epsilon^2} +\frac{1}{\epsilon}\log_{4/3}\left(1 + \frac{D}{R}\right)\right) \right\rceil \hskip0.4cm \text{ has } \hskip0.4cm \frac{f(x_i/\gamma_{z_i}(x_i)) - f^*}{0 -f^*}\leq \epsilon.
\end{equation*}
\end{theorem}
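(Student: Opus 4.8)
The plan is to adapt the classical subgradient convergence analysis to the convex, Lipschitz radial reformulation $\gamma_z(\cdot)$ (Lemma~\ref{lem:Lambda-To-Radial-Reformulation} and Proposition~\ref{prop:Lipschitz-Of-Radial-Reformulation}, the latter controlling the subgradient norm $\norm{\zeta_i}$ by a quantity of order $1/R$), and to organize the iterations into \emph{epochs} delimited by the radial rescalings in Line~6 of Algorithm~\ref{alg:Renegar-Unknown-Optimal}. The two structural facts I would start from are that for a fixed level $z<0$ the scaled iterate satisfies $f(x/\gamma_z(x)) = z/\gamma_z(x)$, and that $\min_x \gamma_z(x) = z/f^*$. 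Combining these, a one-line computation shows the relative accuracy equals $1 - (\min_x \gamma_z)/\gamma_z(x)$, so it suffices to drive $\gamma_z(x)$ to within a factor $1/(1-\epsilon)$ of its minimum. This reduces the whole theorem to a progress statement about subgradient descent on $\gamma_z$ together with an accounting of the rescalings.

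First I would establish the within-epoch (constant-$z$) guarantee. Expanding $\norm{x_{i+1}-x^*}^2$ against a minimizer $x^*$ of $\gamma_z$, using the subgradient inequality $\langle \zeta_i, x_i - x^*\rangle \ge \gamma_z(x_i) - \gamma_z(x^*)$ and the step $\alpha_i = \epsilon/(2\norm{\zeta_i}^2)$, the per-step decrease in $\norm{x_i-x^*}^2$ is at least $\alpha_i\big(2(\gamma_z(x_i)-\min\gamma_z) - \epsilon/2\big)$. During an epoch the iterate sits in $[3/4,1]$ and the minimum is bounded away, so this decrease is of order $\epsilon R^2$ per step; since $\norm{x_i-x^*}\le D$ bounds the total budget, each epoch reduces $\gamma_z$ from $1$ down through the threshold $3/4$ in $O\big((D/R)^2/\epsilon\big)$ steps, and the final epoch (where $\gamma_z$ must be pushed to within $\epsilon$ of its minimum rather than merely below $3/4$) costs $O\big((D/R)^2/\epsilon^2\big)$.

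Next I would analyze the rescalings across epochs. Each rescaling sets $(x,z):=(x,z)/\gamma_z(x)$ with $\gamma_z(x)\le 3/4$; by the homogeneity relation $\gamma_{z/\gamma}(x/\gamma)=1$ this resets the iterate onto the unit level set, multiplies $z<0$ by $1/\gamma_z(x)\ge 4/3$, and hence multiplies $\min_x\gamma_z = z/f^*$ by $\ge 4/3$. Tracking the quantity $z/f^* = \lvert f(\vec 0)\rvert/\lvert f^*\rvert$ initially and invoking a convexity argument along the segment from $\vec 0$ to an optimal point (comparing the descent toward the optimum with the fact that $f\le 0$ on the radius-$R$ ball) gives $\lvert f^*\rvert/\lvert f(\vec 0)\rvert \le 1 + D/R$, so the minimum starts at least $1/(1+D/R)$ and need only grow to order $3/4$. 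Thus $O\big(\log_{4/3}(1+D/R)\big)$ epochs suffice, and multiplying this epoch count by the $O\big((D/R)^2/\epsilon\big)$ early-epoch cost and adding the final $O\big((D/R)^2/\epsilon^2\big)$ term produces exactly the additive structure $\tfrac{1}{\epsilon^2}+\tfrac{1}{\epsilon}\log_{4/3}(1+D/R)$.

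The main obstacle is the bookkeeping coupling the two analyses, and in particular pinning down the stated constant $8(D/R)^2$. One must verify that the per-epoch distance $\norm{x_i-x^*}$ stays bounded by $D$ uniformly, even though each rescaling pushes iterates outward by a factor $\ge 4/3$ while the tracked minimizer also moves, and that the step-count estimate is uniform across epochs so the product telescopes cleanly. The delicate point is that the logarithmic term carries only a factor $1/\epsilon$ rather than $1/\epsilon^2$; this is precisely where the fixed threshold $3/4$ is used, since it makes the early epochs require only a constant reduction of $\gamma_z$ (taking $O(1/\epsilon)$ steps) rather than an $\epsilon$-accurate one. The full constant-tracking argument is carried out in \cite{Renegar2016}.
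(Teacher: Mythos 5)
This theorem is not proved in the paper at all: it is quoted verbatim from Renegar (Theorem 1.1 of \cite{Renegar2016}) solely as a point of comparison for Theorems \ref{thm:Convergence-When-Optimal-Unknown} and \ref{thm:Convergence-When-Optimal-Known}, so there is no in-paper argument to measure your attempt against. Judged on its own terms, your outline is a faithful reconstruction of the structure of Renegar's argument: the reduction of relative accuracy to $1 - (\inf\gamma_z)/\gamma_z(x)$ via $f(x/\gamma_z(x)) \leq z/\gamma_z(x)$ and Proposition \ref{prop:Minimum-Of-Radial-Reformulation}, the partition of iterations into epochs delimited by the $\gamma_z \leq 3/4$ rescalings, the standard subgradient budget argument within an epoch, the bound $|f^*| \leq (1+D/R)\,|f(\vec 0)|$ from convexity along the line through $\vec 0$ and an optimal point combined with $f\leq 0$ on the radius-$R$ ball, and the resulting $\log_{4/3}(1+D/R)$ cap on the number of epochs. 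You also correctly isolate why the logarithmic term carries only a factor $1/\epsilon$.

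That said, what you have written is an outline rather than a proof, and you concede as much by deferring the ``full constant-tracking argument'' to \cite{Renegar2016}. Two of the deferrals hide genuine work. First, the uniform bound $\norm{x_i - x^*} \leq D$ in every epoch is exactly where Renegar's additional hypotheses (optimum attained, bounded optimal set) enter, and it needs an argument because each rescaling moves both the iterate and the level $z$, hence also the minimizers of $\gamma_z$ being tracked. Second, the early-epoch progress estimate cannot rest on ``the iterate sits in $[3/4,1]$'': a subgradient step can push $\gamma_z$ above $1$, and $\inf\gamma_z$ can be arbitrarily close to $3/4$ in later epochs, so what is actually needed is a dichotomy of the form ``either $\gamma_z(x_i)-\inf\gamma_z = O(\epsilon)$ (and the accuracy goal is already met) or $\norm{x_i-x^*}^2$ drops by $\Omega(\epsilon R^2)$ per step.'' Neither issue is fatal --- both are resolved in \cite{Renegar2016} --- but as submitted your proposal establishes the shape of the bound, not the bound itself, and in particular does not pin down the constant $8$.
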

\begin{theorem}[Renegar \cite{Renegar2016}, Theorem 1.2] \label{thm:Renegar-Convergence-When-Optimal-Known}
If $\{x_i\}$ is the sequence generated by Algorithm \ref{alg:Renegar-Known-Optimal}, then for any $0<\epsilon<1$, some iteration
\begin{equation*}
i \leq \left\lceil 4 \left(\frac{D}{R}\right)^2 \left(\frac{4}{3}\left(\frac{1-\epsilon}{\epsilon}\right)^2 +4\frac{1-\epsilon}{\epsilon} + \log_2\left(\frac{1-\epsilon}{\epsilon}\right) +\log_2\left(\frac{D}{R}\right)+1\right) \right\rceil
\end{equation*}
\begin{equation*}
\text{has }\hskip0.2cm \frac{f(x_i/\gamma_{z}(x_i)) - f^*}{0 -f^*} \leq \epsilon.
\end{equation*}
\end{theorem}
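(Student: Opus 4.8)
The plan is to recognize Algorithm \ref{alg:Renegar-Known-Optimal} as an instance of Polyak's subgradient method applied to the single fixed convex function $g := \gamma_{f^*}(\cdot)$. First I would establish that $g$ has minimum value exactly $1$, attained precisely on $X^*$: the correspondence $\gamma_z(x) \le 1 \iff f(x) \le z$, which follows from Definition \ref{def:Radial-Reformulation} and convexity of $f$, forces $g(x) \ge 1$ everywhere (since $f(x) \ge f^*$) with equality iff $x \in X^*$. Consequently the step size $\alpha_i = (\gamma_z(x_i) - 1)/\norm{\zeta_i}^2$ is exactly the Polyak step $(g(x_i) - \min g)/\norm{\zeta_i}^2$, so the known optimal value $f^*$ enters only through the known minimal value $1$ of $g$.

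The analytic core is the standard Polyak descent inequality. Fixing any $x^* \in X^*$ and expanding $\norm{x_{i+1} - x^*}^2 = \norm{x_i - \alpha_i \zeta_i - x^*}^2$, the subgradient inequality $\langle \zeta_i, x^* - x_i\rangle \le g(x^*) - g(x_i) = 1 - g(x_i)$ together with the chosen $\alpha_i$ yields
\begin{equation*}
\norm{x_{i+1} - x^*}^2 \le \norm{x_i - x^*}^2 - \frac{(g(x_i) - 1)^2}{\norm{\zeta_i}^2}.
\end{equation*}
Invoking Proposition \ref{prop:Lipschitz-Of-Radial-Reformulation} to bound $\norm{\zeta_i} \le 1/R$ turns this into a guaranteed decrease of at least $R^2(g(x_i) - 1)^2$ per step, while $\norm{x_0 - x^*} \le D$ bounds the total budget, since both $x_0 = \vec 0$ and $X^*$ lie in $\{x \mid f(x) \le f(\vec 0)\}$.

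To convert a small optimality gap into the claimed relative accuracy, I would use that $\gamma_z(x_i) = \gamma$ forces $f(x_i/\gamma) = z/\gamma$, so the reported point satisfies $f(x_i/\gamma_z(x_i)) = f^*/\gamma_z(x_i)$ and hence
\begin{equation*}
\frac{f(x_i/\gamma_z(x_i)) - f^*}{0 - f^*} = \frac{\gamma_z(x_i) - 1}{\gamma_z(x_i)}.
\end{equation*}
This is at most $\epsilon$ exactly when $g(x_i) - 1 \le \epsilon/(1-\epsilon)$, which explains the pervasive $(1-\epsilon)/\epsilon$ factors as the reciprocal of the target gap.

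I expect the main obstacle to be reproducing Renegar's precise iteration count, in particular its logarithmic terms. A one-shot telescoping of the descent inequality already gives $\min_{i \le k}(g(x_i) - 1)^2 \le D^2/(R^2(k+1))$, hence a clean $O((D/R)^2((1-\epsilon)/\epsilon)^2)$ bound; this is essentially the sharper route behind Theorem \ref{thm:Convergence-When-Optimal-Known}. Renegar's stated bound instead arises from an epoch decomposition tracking the geometric halving of $g(x_i) - 1$: the initial gap satisfies $g(\vec 0) - 1 = g(\vec 0) - g(x^*) \le D/R$ by the same Lipschitz bound, and halving it down to the target $\epsilon/(1-\epsilon)$ proceeds through $\log_2(D/R) + \log_2((1-\epsilon)/\epsilon)$ epochs. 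Within each epoch the per-step decrease caps the epoch length, so the resulting geometric sum contributes the leading $\tfrac{4}{3}((1-\epsilon)/\epsilon)^2$ and $4(1-\epsilon)/\epsilon$ terms while the epoch count contributes the two logarithms. Carefully tuning the epoch boundaries and constants to match the theorem is the delicate bookkeeping that makes this bound messier than the telescoping argument the present paper favors.
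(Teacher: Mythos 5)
You should know at the outset that this paper does not prove Theorem \ref{thm:Renegar-Convergence-When-Optimal-Known} at all: it is quoted from \cite{Renegar2016} purely for comparison, and the only ``proof'' in this document is the citation. So there is no in-paper argument to compare yours against; the following assesses your sketch on its own terms.

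Your reading of Algorithm \ref{alg:Renegar-Known-Optimal} as Polyak's step-size rule applied to the single fixed function $g=\gamma_{f^*}(\cdot)$ is correct, and each ingredient you assemble is sound: $\min g=1$ attained exactly on $X^*$ (Proposition \ref{prop:Minimum-Of-Radial-Reformulation} with $z=f^*$), the Polyak descent inequality, $\norm{\zeta_i}\leq 1/R$ from Proposition \ref{prop:Lipschitz-Of-Radial-Reformulation}, $\norm{x_0-x^*}\leq D$, and the conversion to relative accuracy with target gap $\epsilon/(1-\epsilon)$ (for which only the inequality $f(x_i/\gamma)\leq f^*/\gamma$ is needed, not the equality you assert, which would require left-continuity of $f^p$ in $\gamma$). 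The genuine gap is that this machinery, telescoped as you describe, proves a \emph{different} theorem --- roughly $i\leq\lceil (D/R)^2((1-\epsilon)/\epsilon)^2\rceil$ --- and you never actually derive the bound in the statement. The epoch/halving argument that is supposed to produce Renegar's $\tfrac{4}{3}((1-\epsilon)/\epsilon)^2$, $4(1-\epsilon)/\epsilon$, and logarithmic terms is only gestured at: you do not define the epochs, bound their lengths, or sum the resulting series, and that bookkeeping is precisely where the content of the quoted bound lives. Nor can you shortcut by saying your cleaner bound subsumes the stated one, since the terms $\log_2((1-\epsilon)/\epsilon)+\log_2(D/R)$ can be arbitrarily negative, so Renegar's expression does not dominate yours in every regime and the implication is not automatic. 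It is worth recognizing that the one-shot telescoping you favor (with $\mathrm{dist}(x_0,X^*)$ in place of $D$) is essentially the observation this paper itself exploits to obtain Theorem \ref{thm:Convergence-When-Optimal-Known}, which it presents as a strict improvement over the quoted result --- so you have, in effect, rederived the paper's improvement rather than Renegar's theorem.
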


These bounds are remarkable in that they were the first to attain the same rate of growth with respect to $\epsilon$ as traditional methods that assume Lipschitz continuity. However, the constants involved could be very large. In particular, the value of $D$ can be enormous if a small perturbation of the problem would have an unbounded set of optimal solutions. Note if the set of optimal solutions $X^*$ is nonempty, then $D > \mathrm{dist}(x_0,X^*)$. As a result, our convergence bounds in Theorems \ref{thm:Convergence-When-Optimal-Unknown} and \ref{thm:Convergence-When-Optimal-Known} strictly improve upon those of Theorems \ref{thm:Renegar-Convergence-When-Optimal-Unknown} and \ref{thm:Renegar-Convergence-When-Optimal-Known}.

\section{Preliminaries} \label{sec:Preliminaries}

Renegar's framework begins by converting the problem of minimizing $f$ into conic form. Let $\mathcal{K}$ be the closure of $\{(xs,s,ts) \mid s > 0, (x,t)\in\mathrm{epi }f\}$, which is the conic extension of the epigraph of $f$. Note that the restriction of $\mathcal{K}$ to $s=1$ is exactly the epigraph of $f$. Then Renegar considers the following equivalent conic form problem
\begin{equation} \label{eq:Conic-Form-Of-Convex-Function}
\begin{cases}
	\min\    & t \\
    \text{s.t. } & s=1, (x,s,t) \in \mathcal{K}.
\end{cases}
\end{equation}

Observe that $(\vec 0, 1, 0)$ lies in the interior of $\mathcal{K}$. Then Renegar defines the following function, which lies at the heart of the framework. Although this function was originally developed for any conic program, we state it specifically in terms the above program.
$$\lambda(x,s,t) = \inf\{\lambda \mid (x,s,t) - \lambda(\vec 0,1,0) \not\in\mathcal{K}\}.$$

At this point, it is easy to show the connection between this function when restricted to $t=z$ and our radial reformulation of level $z$.
\begin{lemma}\label{lem:Lambda-To-Radial-Reformulation}
For any $x\in\R^n$ and $z\in\R$, $\gamma_z(x) = 1-\lambda(x,1,z)$.
\end{lemma}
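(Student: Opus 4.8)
The plan is to unpack both definitions and reduce the claimed identity to a single statement about membership in the cone $\mathcal{K}$. Evaluating $\lambda$ at $(x,1,z)$ and using $(x,1,z)-\lambda(\vec 0,1,0)=(x,1-\lambda,z)$ gives
\[
\lambda(x,1,z)=\inf\{\lambda \mid (x,1-\lambda,z)\notin\mathcal{K}\}.
\]
The decreasing substitution $s=1-\lambda$ turns the infimum over $\lambda$ into a supremum over $s$, yielding $1-\lambda(x,1,z)=\sup\{s \mid (x,s,z)\notin\mathcal{K}\}$. Thus it suffices to prove $\gamma_z(x)=\sup\{s \mid (x,s,z)\notin\mathcal{K}\}$. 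This supremum is well posed: since $\mathcal{K}\subseteq\{s\geq 0\}$, every $s<0$ lies in the set, so it is nonempty.

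Next I would record the description of the generating cone in terms of the perspective. Expanding a generator $(xs,s,ts)$ shows that, before taking closure, $\mathcal{K}$ is generated by $C=\{(u,\sigma,w)\mid \sigma>0,\ f^p(u,\sigma)\leq w\}$, the epigraph of the perspective restricted to $\sigma>0$; in particular $(x,\sigma,z)\in C$ exactly when $\sigma>0$ and $f^p(x,\sigma)\leq z$. The easy inclusion is then the upper bound $\sup\leq\gamma_z(x)$. For any $s>\gamma_z(x)$, the definition of $\gamma_z(x)$ as an infimum produces some $\gamma_0\in(0,s)$ with $f^p(x,\gamma_0)\leq z$, so $(x,\gamma_0,z)\in C\subseteq\mathcal{K}$. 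Because $\mathcal{K}$ is a convex cone containing $(\vec 0,1,0)$ (noted in the excerpt), it is closed under adding nonnegative multiples of $(\vec 0,1,0)$, and writing $(x,s,z)=(x,\gamma_0,z)+(s-\gamma_0)(\vec 0,1,0)$ gives $(x,s,z)\in\mathcal{K}$. Hence no $s>\gamma_z(x)$ belongs to the set, so $\sup\leq\gamma_z(x)$.

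The reverse bound $\sup\geq\gamma_z(x)$ is where the real work lies, and I expect the closure to be the main obstacle. For $0<s<\gamma_z(x)$ the value $s$ lies strictly below the infimum defining $\gamma_z(x)$, so $f^p(x,s)>z$; the issue is that passing to the closure $\mathcal{K}=\overline{C}$ could in principle drag such a point back in. To rule this out I would argue by contradiction: if $(x,s,z)\in\overline{C}$, choose $(u_k,\sigma_k,w_k)\in C$ with $(u_k,\sigma_k,w_k)\to(x,s,z)$, so $\sigma_k\to s>0$ and $w_k\geq f^p(u_k,\sigma_k)$. Lower semicontinuity of the perspective at points with $\sigma>0$ (which holds because $f$ is lower-semicontinuous and convex) then gives $f^p(x,s)\leq\liminf_k f^p(u_k,\sigma_k)\leq\lim_k w_k=z$, contradicting $f^p(x,s)>z$. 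Therefore $(x,s,z)\notin\mathcal{K}$ for every such $s$, which forces $\sup\geq\gamma_z(x)$. Combining the two bounds yields $\gamma_z(x)=\sup\{s\mid(x,s,z)\notin\mathcal{K}\}=1-\lambda(x,1,z)$. The one genuinely delicate point to nail down carefully is the joint lower semicontinuity of $f^p$ on $\{\sigma>0\}$; everything else is bookkeeping, with the cone-closure identity $(x,s,z)=(x,\gamma_0,z)+(s-\gamma_0)(\vec 0,1,0)$ (equivalently, monotonicity of $\sigma\mapsto f^p(x,\sigma)$ coming from convexity and $f(\vec 0)<0$) handling the easy direction.
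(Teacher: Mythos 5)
Your proof is correct and rests on the same underlying computation as the paper's: substitute $s=1-\lambda$, convert the infimum over the complement into a supremum, and characterize membership of $(x,s,z)$ in $\mathcal{K}$ via the condition $f^p(x,s)\leq z$. The paper compresses this into a chain of equalities that silently identifies $\mathcal{K}$, on the slice $\{s>0\}$, with its pre-closure generating set $C$; you instead prove the two bounds separately and explicitly rule out the possibility that taking the closure drags in points $(x,s,z)$ with $0<s<\gamma_z(x)$, using joint lower semicontinuity of $f^p$ on $\{\sigma>0\}$. That is precisely the step the paper elides, and your handling of it is sound --- note only that lower semicontinuity and properness of $f$ together with $\sigma_k\to s>0$ are what is needed there; convexity plays no role in that particular limit argument.
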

\begin{proof}
Follows directly from the definitions of $\mathcal{K}$ and $\gamma_z(\cdot)$:
\begin{align*}
\lambda(x,1,z) & =  \sup\left\{\lambda \mid (x,1,z) - \lambda(\vec 0,1,0) \in\mathcal{K}\right\}\\
& = \sup\left\{\lambda \mid 1-\lambda > 0, (\frac{x}{1-\lambda},\frac{z}{1-\lambda}) \in\mathrm{epi }f\right\}\\
& = \sup\left\{\lambda < 1 \mid f\left(\frac{x}{1-\lambda}\right) \leq \frac{z}{1-\lambda} \right\}\\
& = 1 - \inf\left\{(1 - \lambda) > 0 \mid (1-\lambda)f\left(\frac{x}{1-\lambda}\right) \leq z \right\}\\
& = 1 - \gamma_z(x).  
\end{align*}
\end{proof}

As a consequence of Propositions 2.1 and 3.2 of \cite{Renegar2016}, we know that the radial reformulation is both convex and Lipschitz.
\begin{proposition} \label{prop:Lipschitz-Of-Radial-Reformulation}
For any $z\in\R$, the radial reformulation of level $z$, $\gamma_z(\cdot)$, is convex and Lipschitz with constant $1/R$ (independent of the level $z$).
\end{proposition}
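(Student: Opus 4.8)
The plan is to reduce everything to the identity established in Lemma~\ref{lem:Lambda-To-Radial-Reformulation}, namely $\gamma_z(x) = 1 - \lambda(x,1,z)$, and then read off both claims from properties of Renegar's function $\lambda$, specialized to the affine slice $s=1$, $t=z$.

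For convexity, I would first note that $\lambda$ is concave on all of $\R^{n+2}$. This is immediate from its defining formula $\lambda(y) = \sup\{\lambda \mid y - \lambda(\vec 0,1,0) \in \mathcal{K}\}$ used in the proof of Lemma~\ref{lem:Lambda-To-Radial-Reformulation}: if $y_1 - \lambda_1(\vec 0,1,0)$ and $y_2 - \lambda_2(\vec 0,1,0)$ both lie in the convex cone $\mathcal{K}$, then so does any convex combination, which forces $\lambda(\theta y_1 + (1-\theta)y_2) \ge \theta\lambda_1 + (1-\theta)\lambda_2$ (this is Renegar's Proposition~2.1). Since $x \mapsto (x,1,z)$ is affine, $x \mapsto \lambda(x,1,z)$ is concave, and therefore $\gamma_z = 1 - \lambda(\cdot,1,z)$ is convex.

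For the Lipschitz bound, the one geometric fact I would extract from the definition of $R$ is that $f(v) \le 0$ for every $v$ with $\norm{v} \le R$, which by the definition of $\mathcal{K}$ means $(v,1,0) \in \mathcal{K}$ for all such $v$; equivalently, rescaling by the positive-homogeneity of the cone, $(d, \norm{d}/R, 0) \in \mathcal{K}$ for every $d \in \R^n$. Fixing $x, x' \in \R^n$, setting $d = x - x'$, and letting $\mu = \lambda(x',1,z)$ so that $(x',1,z) - \mu(\vec 0,1,0) \in \mathcal{K}$, I would add these two cone elements and use that a convex cone is closed under addition to obtain
\begin{equation*}
\bigl[(x',1,z) - \mu(\vec 0,1,0)\bigr] + \bigl(d, \tfrac{\norm{d}}{R}, 0\bigr) = (x,1,z) - \bigl(\mu - \tfrac{\norm{d}}{R}\bigr)(\vec 0,1,0) \in \mathcal{K}.
\end{equation*}
Hence $\lambda(x,1,z) \ge \mu - \norm{d}/R = \lambda(x',1,z) - \norm{x-x'}/R$, and swapping the roles of $x$ and $x'$ gives $\lvert\lambda(x,1,z) - \lambda(x',1,z)\rvert \le \norm{x-x'}/R$. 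This transfers to $\gamma_z$ by the identity above, yielding the Lipschitz constant $1/R$; it is exactly the content of Renegar's Proposition~3.2 read on the slice $s=1$, $t=z$.

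The main thing to get right, and where I expect the only real subtlety, is pinning the constant to exactly $1/R$ and verifying its independence from $z$. Both fall out of the computation above: the perturbation direction $(\vec 0,1,0)$ has a zero third coordinate, so the level $z$ is merely carried along unchanged in the third slot and never interacts with the horizontal perturbation $d$; and the largest ball around the reference point $(\vec 0,1,0)$ in the $x$-directions that stays inside $\mathcal{K}$ has radius precisely $R$, which is what produces the factor $1/R$. A secondary technical point is whether the suprema defining $\lambda$ are attained, so that $(x',1,z)-\mu(\vec 0,1,0)$ is genuinely a member of $\mathcal{K}$ rather than only a limit of such members; this is handled by the closedness of $\mathcal{K}$, which is exactly why $\mathcal{K}$ was defined as a closure.
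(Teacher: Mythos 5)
Your proposal is correct, and it is worth noting that the paper itself offers no proof of this proposition at all: it simply records the statement as "a consequence of Propositions 2.1 and 3.2 of Renegar" together with the identity $\gamma_z(x) = 1-\lambda(x,1,z)$ from Lemma~\ref{lem:Lambda-To-Radial-Reformulation}. What you have done is follow exactly that intended route but actually supply the two missing ingredients: the concavity of $\lambda$ (from convexity of $\mathcal{K}$ and the $\sup$ formulation, which legitimately coincides with the paper's $\inf$ formulation because the feasible set of $\lambda$'s is a downward-closed interval, closed at its top by closedness of $\mathcal{K}$), and the Lipschitz bound via the cone-addition argument using $(d,\norm{d}/R,0)\in\mathcal{K}$. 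Both steps check out: the element $(d,\norm{d}/R,0)$ does lie in $\mathcal{K}$ by positive homogeneity and closedness (the boundary case $\norm{v}=R$ is covered by lower semicontinuity of $f$ or by taking closures), the attainment of the supremum defining $\mu$ is correctly handled by closedness of $\mathcal{K}$, and the finiteness of $\mu$ holds because $\gamma_z(x')<\infty$ (the paper's observation that $f^p(x,\gamma)\to-\infty$ as $\gamma\to\infty$). Your closing remark correctly identifies why the constant is independent of $z$: the perturbation direction $(\vec 0,1,0)$ has zero third coordinate, so $z$ is inert. The only cosmetic quibble is the word "precisely" in your claim about the radius of the largest ball: for the stated bound you only need radius at least $R$, and sharpness is neither claimed nor needed. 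As a self-contained replacement for the paper's citation, the argument is complete.
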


We now see that the radial reformulation is notably more well-behaved than the original function $f$. However, to justify it as a meaningful proxy for the original function in an optimization setting, we need to relate their minimum values. In the following proposition, we establish such a connection between $f$ and any radial reformulation with a negative level.

Note that $f^p$ is strictly decreasing in $\gamma$. To see this, observe that, for any $x\in\R^n$, all sufficiently large $\gamma$ have $f(x/\gamma) < f(\vec 0)/2<0$. Then it follows that $\lim\limits_{\gamma\to \infty} f^p(x,\gamma) \leq \lim\limits_{\gamma\to \infty} \gamma f(\vec 0)/2 = -\infty$. Since perspective functions are convex (see \cite{Boyd-ConvexOptimization} for an elementary proof of this fact), we conclude that $f^p$ is strictly decreasing in $\gamma$.

\begin{proposition} \label{prop:Minimum-Of-Radial-Reformulation}
For any $z<0$, the minimum value of $\gamma_z(\cdot)$ is $z/f^*$ (where $z/-\infty := 0$).
Further, if $\gamma_z(x)=0$, then $\lim\limits_{t\to\infty}f(t x) = -\infty$. 
\end{proposition}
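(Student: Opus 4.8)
The plan is to prove the two statements separately, both directly from the definition $\gamma_z(x) = \inf\{\gamma > 0 \mid \gamma f(x/\gamma) \le z\}$ together with the strict monotonicity of $f^p$ in $\gamma$ established above. For the value of the minimum, the central idea is to interchange the minimization over $x$ with the infimum defining $\gamma_z$, which collapses the two-variable problem to a one-dimensional question about the sign of $\gamma f^*$.

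Concretely, I would first observe that $\inf_x \gamma_z(x) = \inf\{\gamma > 0 \mid \exists x,\ \gamma f(x/\gamma) \le z\}$, since an infimum of infima equals the infimum over the union of the inner feasible sets. For each fixed $\gamma > 0$ the map $x \mapsto y = x/\gamma$ is a bijection, so the inner existence condition becomes $\exists y : \gamma f(y) \le z$. Because $f^* \le f(\vec 0) < 0$, this condition holds for every $\gamma$ with $\gamma f^* < z$ and fails for every $\gamma$ with $\gamma f^* > z$; dividing by $f^* < 0$ (which flips the inequality) shows the feasible set of admissible $\gamma$'s is, up to its single boundary point, the half-line $\{\gamma \mid \gamma \ge z/f^*\}$, whose infimum is $z/f^*$. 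When $f^* = -\infty$ every $\gamma > 0$ is feasible and the infimum is $0$, matching the convention $z/(-\infty) := 0$. To upgrade ``infimum'' to ``minimum'', I would exhibit an explicit minimizer when $f^*$ is attained at some $x^*$: at $x = (z/f^*)x^*$ and $\gamma = z/f^*$ one has $\gamma f(x/\gamma) = (z/f^*) f^* = z$, so $\gamma_z((z/f^*)x^*) = z/f^*$.

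For the second claim, suppose $\gamma_z(x) = 0$. Since $f^p(x,\cdot)$ is strictly decreasing, the feasible set $\{\gamma > 0 \mid \gamma f(x/\gamma) \le z\}$ is an up-set (once the defining inequality holds for some $\gamma$ it continues to hold for every larger $\gamma$); having infimum $0$ therefore forces it to be all of $(0,\infty)$. Hence $\gamma f(x/\gamma) \le z$ for every $\gamma > 0$, and substituting $t = 1/\gamma$ gives $f(tx) \le t z$ for all $t > 0$. As $z < 0$, the right-hand side tends to $-\infty$, so $\lim_{t\to\infty} f(tx) = -\infty$.

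The genuinely delicate point is the boundary case $\gamma f^* = z$ in the first part, where existence of a feasible $y$ depends on whether $f^*$ is attained; I expect this to be the place requiring the most care. However, since only the \emph{value} of the infimum is needed, this single boundary point is immaterial, and the real substance is the interchange of infima and the reduction to the sign analysis of $\gamma f^* \le z$. The strict monotonicity of $f^p$ then does the remaining work in the second part with essentially no computation.
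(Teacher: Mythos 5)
Your proof is correct, and the first half takes a genuinely different (and arguably cleaner) route than the paper's. The paper establishes the value $z/f^*$ in two separate steps: a pointwise lower bound $\gamma_z(x)\geq z/f^*$ obtained from $f\left((f^*/z)x\right)\geq f^*$ together with the monotonicity of $f^p$ in $\gamma$, followed by an explicit minimizing sequence $x_i$ with $f(x_i)\to f^*$ for which $\gamma_z\left((z/f(x_i))x_i\right)=z/f(x_i)\to z/f^*$. You instead swap the two infima and reduce everything to the one-dimensional feasibility question $\exists y:\ \gamma f(y)\leq z$, which the sign of $\gamma f^*-z$ answers; the same scaling identity is at work in both arguments, but your organization avoids the minimizing sequence entirely and isolates the only delicate point (the boundary $\gamma f^*=z$, which is immaterial for the value of the infimum) explicitly, whereas the paper glosses over the attainment question by simply declaring the lower bound to be ``the minimum value.'' Your explicit minimizer when $f^*$ is attained is a small bonus the paper omits. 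For the second claim, both proofs use the substitution $t=1/\gamma$; your observation that the feasible set of $\gamma$'s is upward closed, so that $\gamma_z(x)=0$ forces $f(tx)\leq tz$ for \emph{all} $t>0$, actually tightens the paper's version, which only directly yields $f(tx)<tz$ along an unbounded set of $t$'s and implicitly leans on convexity of $t\mapsto f(tx)$ to upgrade that to a limit. No gaps.
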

\begin{proof}
First we show that this minimum value lower bounds $\gamma_z(x)$ for all $x\in\R^n$. Since $ f\left((f^*/z)x\right) \geq f^*$, we know that $\gamma=z/f^*$ has $f^p(x,\gamma) \geq z$. Thus $\gamma_z(x)\geq z/f^*$ since $f^p$ is strictly decreasing in $\gamma$.

Consider any sequence $\{x_i\}$ with $f(x_i)<0$ and $\lim\limits_{i\to \infty}f(x_i) = f^*$. Observe that 
$$f^p\left( \frac{z}{f(x_i)}x_i, \frac{z}{f(x_i)}\right) = \frac{z}{f(x_i)}f(x_i) = z.$$
Since $f^p$ is strictly decreasing in $\gamma$, we have $\gamma_z\left((z/f(x_i))x_i\right) = z/f(x_i)$. It follows that $\lim\limits_{i\to\infty} \gamma_z\left((z/f(x_i))x_i\right) = z/f^*$. Then our lower bound is indeed the minimum value of the radial reformulation.
 
Our second observation follows from the definition of the radial reformulation (using the change of variables $t = 1/\gamma$):
\begin{align*}
\gamma_z(x)=0 & \Leftrightarrow \inf\{\gamma>0 \mid f(x/\gamma) < z/\gamma \}=0\\
& \Leftrightarrow \sup\{t >0 \mid f(t x) < t z\}=\infty\\
& \Rightarrow \lim_{t\to\infty}f(t x) = -\infty. 
\end{align*}
\end{proof}

Finally, we give a characterization of the subgradients of our radial reformulation. Although this description is not necessary for our analysis, it helps establish the practicality of the Radial Subgradient Method. We see that the subgradients of $\gamma_z(\cdot)$ can be computed easily from normal vectors of the epigraph of the original function. This result follows as a direct consequence of Proposition 7.1 of \cite{Renegar2016}.

\begin{proposition} \label{prop:Subgradients-Of-Radial}
For any $z<0$, the subgradients of the radial reformulation are given by 
$$\partial\gamma_z(x) = \left\{ \frac{\gamma_z(x)}{\langle \zeta, x\rangle + \delta z}\zeta \mid (\vec 0,0)\neq(\zeta, \delta) \in N_{\mathrm{epi } f}\left(\frac{x}{\gamma_z(x)}, \frac{z}{\gamma_z(x)}\right)\right\}.$$
\end{proposition}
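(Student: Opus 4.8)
The plan is to work through Renegar's function $\lambda$, for which Lemma~\ref{lem:Lambda-To-Radial-Reformulation} already gives $\gamma_z(x) = 1 - \lambda(x,1,z)$. Since $\lambda$ is concave (its superlevel sets $\{p : \lambda(p)\ge c\} = \mathcal{K} + c(\vec 0,1,0)$ are translates of the convex cone $\mathcal{K}$), I would first characterize its supergradients from first principles and then read off $\partial\gamma_z(x)$ by negating and projecting onto the $x$-block; this reproduces the content of the cited Proposition~7.1 without having to quote it. Writing $e = (\vec 0,1,0)$ and $\gamma = \gamma_z(x)$, the point $q = (x,1,z) - \lambda(x,1,z)e = (x,\gamma,z)$ lies on $\partial\mathcal{K}$. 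For any $w \in N_{\mathcal{K}}(q)$, testing the supporting inequality $\langle w, q' - q\rangle \le 0$ at $q' = (x+h,1,z) - \lambda(x+h,1,z)e \in \mathcal{K}$ gives $\langle w, h\rangle \le (\lambda(x+h,1,z) - \lambda(x,1,z))\langle w, e\rangle$, which, when $\langle w, e\rangle < 0$, rearranges to the supergradient inequality for $w/\langle w, e\rangle$.

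The next step is to convert $N_{\mathcal{K}}(x,\gamma,z)$ into $N_{\mathrm{epi}f}(x/\gamma, z/\gamma)$. First, a normal vector to a cone is orthogonal to the point it supports (apply the supporting inequality at $0$ and at $2q$, both in $\mathcal{K}$), so writing $w = (\zeta,\eta,\delta)$ we get $\langle\zeta,x\rangle + \eta\gamma + \delta z = 0$, i.e.\ $\eta = -(\langle\zeta,x\rangle + \delta z)/\gamma$. Next, because $\mathcal{K}$ is the closed conic hull of the scaled epigraph, a point $(a,s,b)$ with $s>0$ lies in $\mathcal{K}$ iff $(a/s,b/s)\in\mathrm{epi}f$; substituting $(a,s,b)=(su,s,sv)$ into the supporting inequality and dividing by $s>0$ shows that $(\zeta,\eta,\delta) \in N_{\mathcal{K}}(x,\gamma,z)$ is equivalent to $(\zeta,\delta)\in N_{\mathrm{epi}f}(x/\gamma,z/\gamma)$ with $\eta$ as above. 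Feeding this into the supergradient $w/\langle w,e\rangle = (\zeta,\eta,\delta)/\eta$, reading off the $x$-block $\zeta/\eta$, and negating for $\gamma_z = 1 - \lambda$ yields $-\zeta/\eta = \gamma\,\zeta/(\langle\zeta,x\rangle + \delta z)$, which is exactly the claimed formula once $\gamma=\gamma_z(x)$ is substituted back.

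I expect the main obstacle to be the sign and nondegeneracy bookkeeping rather than any deep step. Specifically, I must confirm $\langle\zeta,x\rangle + \delta z > 0$ for every nonzero $(\zeta,\delta)\in N_{\mathrm{epi}f}(x/\gamma,z/\gamma)$, so that $\eta<0$ (the sign required above) and the denominator never vanishes; this is precisely where the normalization $f(\vec 0)<0$ and $\vec 0\in\mathrm{int\,dom}f$ enter. When $\delta<0$, scaling so that $\delta=-1$ makes $\zeta$ a subgradient of $f$ at $x/\gamma$, and the subgradient inequality evaluated at $\vec 0$ gives $\langle\zeta,x/\gamma\rangle - z/\gamma \ge -f(\vec 0) > 0$; when $\delta=0$ the normal $\zeta$ must have positive inner product with $x$ because $\vec 0$ lies in the interior of $\mathrm{dom}f$. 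Either way $\langle\zeta,x\rangle + \delta z>0$. I would also verify that the correspondence between the two normal cones is a bijection and that projecting the superdifferential of $\lambda$ onto the slice $\{s=1,\,t=z\}$ is faithful (the slice meets the interior of $\mathrm{dom}\,\lambda$, so the affine-composition rule for subdifferentials applies), and note that the case $\gamma_z(x)=0$ is implicitly excluded since $x/\gamma_z(x)$ appears in the statement.
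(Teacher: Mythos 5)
Your proposal is correct in outline, but it is worth noting that the paper does not actually prove this proposition: it is stated with the single remark that it ``follows as a direct consequence of Proposition 7.1 of \cite{Renegar2016},'' so there is no in-paper argument to compare against. What you have written is a genuinely self-contained derivation of the cited fact: concavity of Renegar's $\lambda$ from the translate structure of its superlevel sets $\mathcal{K}+ce$, the supporting-hyperplane computation turning $w\in N_{\mathcal{K}}(x,\gamma,z)$ with $\langle w,e\rangle<0$ into a supergradient $w/\langle w,e\rangle$, the orthogonality $\langle w,q\rangle=0$ pinning down $\eta=-(\langle\zeta,x\rangle+\delta z)/\gamma$, the scaling bijection between $N_{\mathcal{K}}(x,\gamma,z)$ and $N_{\mathrm{epi}f}(x/\gamma,z/\gamma)$, and the positivity of $\langle\zeta,x\rangle+\delta z$ via the normalization $f(\vec 0)<0$ and $\vec 0\in\mathrm{int\,dom}f$ are all sound, and the last point is exactly the place where the paper's standing assumptions earn their keep. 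The one step you flag but do not execute is the reverse inclusion $\partial\gamma_z(x)\subseteq\{\cdots\}$: for that you need that any supergradient $g$ of $\lambda(\cdot,1,z)$ at $x$ lifts to a supergradient of the full $\lambda$ (your affine-composition remark), and then that such a lift satisfies $\langle g,e\rangle=1$ (from $\lambda(p+te)=\lambda(p)+t$) and $-g\in N_{\mathcal{K}}(q)$ because $\{\lambda\geq\lambda(p)\}=\mathcal{K}+\lambda(p)e$; this is short but should be written out to close the equality of sets. With that filled in, your argument would let the paper stand on its own where it currently leans entirely on an external reference.
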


\section{Analysis of Convergence} \label{sec:Analysis-Of-Convergence}

First, we observe the following two properties hold at each iteration of our algorithm.
\begin{lemma} \label{lem:Identity-For-Radial-Reformulation}
At any iteration $k\geq0$ of Algorithm \ref{alg:Radial-Subgradient-Method}, $\gamma_{z_{k}}(x_k)=1$.
\end{lemma}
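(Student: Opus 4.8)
The plan is to prove this by induction on $k$, with the crucial ingredient being a scaling identity for the radial reformulation that reflects the conic structure underlying its definition. For the base case $k=0$, I would compute $\gamma_{z_0}(x_0)$ directly. Since $x_0 = \vec 0$ and $z_0 = f(\vec 0)$, the perspective function satisfies $f^p(\vec 0, \gamma) = \gamma f(\vec 0)$, so that $\gamma_{z_0}(x_0) = \inf\{\gamma > 0 \mid \gamma f(\vec 0) \leq f(\vec 0)\}$. Because $f(\vec 0) < 0$, dividing the defining inequality by $f(\vec 0)$ reverses it, and this set is exactly $\{\gamma > 0 \mid \gamma \geq 1\}$, whose infimum is $1$.

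The heart of the argument is the scaling identity: for any $c > 0$, any $x \in \R^n$, and any $z \in \R$,
$$\gamma_{z/c}(x/c) = \frac{1}{c}\,\gamma_z(x).$$
I would establish this from Definition \ref{def:Radial-Reformulation} via the change of variables $\mu = c\gamma$. The defining constraint $f^p(x/c, \gamma) = \gamma f\!\left(x/(c\gamma)\right) \leq z/c$ is equivalent, after multiplying through by $c > 0$, to $\mu f(x/\mu) = f^p(x, \mu) \leq z$ with $\mu = c\gamma$. Since $\gamma = \mu/c$ and $c>0$ preserves $\gamma > 0 \iff \mu > 0$, taking infima over the two equivalent constraint sets yields the identity.

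With these pieces in hand, the inductive step is immediate. Assuming $\gamma_{z_k}(x_k) = 1$, the algorithm forms $(x_{k+1}, z_{k+1}) = \frac{1}{c}(\tilde x_{k+1}, z_k)$ where $c = \gamma_{z_k}(\tilde x_{k+1}) > 0$ (the case $c = 0$ triggers termination and is excluded from consideration). Applying the scaling identity with this choice of $c$ gives
$$\gamma_{z_{k+1}}(x_{k+1}) = \gamma_{z_k/c}(\tilde x_{k+1}/c) = \frac{1}{c}\,\gamma_{z_k}(\tilde x_{k+1}) = \frac{1}{c}\cdot c = 1,$$
completing the induction.

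I expect the main (though mild) obstacle to be verifying the scaling identity cleanly, in particular confirming that the substitution $\mu = c\gamma$ respects the positivity constraint and correctly transports the infimum. The remaining steps are routine bookkeeping once this homogeneity property is established. It is worth noting that the base case quietly relies on the standing normalization $f(\vec 0) < 0$, without which the sign reversal in the defining inequality would fail and the identity $\gamma_{z_0}(x_0)=1$ would not hold.
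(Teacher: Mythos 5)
Your proof is correct and follows essentially the same route as the paper: the paper's inductive step is precisely your scaling identity $\gamma_{z/c}(x/c)=\tfrac{1}{c}\gamma_z(x)$ carried out inline via the same change of variables, with the base case likewise resting on $f(\vec 0)<0$ (the paper phrases it as $f^p$ being strictly decreasing in $\gamma$). The only cosmetic difference is that you package the homogeneity property as a standalone lemma and frame the argument as induction, even though (as in the paper) the inductive hypothesis is never actually needed.
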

\begin{proof}
When $k=0$, we have $f^p(x_0,1) = z_{0}$, and so the result follows from $f^p$ being strictly decreasing in $\gamma$. The general case follows from simple algebraic manipulation:
\begin{align*}
\gamma_{z_{k+1}}(x_{k+1}) & = \inf\left\{\gamma > 0 \mid \gamma f(x_{k+1}/\gamma) \leq z_{k+1}\right\}\\
& = \inf\left\{\gamma > 0 \mid \gamma f\left(\frac{\tilde x_{k+1}}{\gamma_{z_{k}}(\tilde x_{k+1})\gamma}\right) \leq z_{k+1}\right\}\\
& = \inf\left\{\gamma > 0 \mid \gamma_{z_{k}}(\tilde x_{k+1}) \gamma f\left(\frac{\tilde x_{k+1}}{\gamma_{z_{k}}(\tilde x_{k+1})\gamma}\right) \leq z_{k}\right\}\\
& = \frac{1}{\gamma_{z_{k}}(\tilde x_{k+1})}\gamma_{z_{k}}(\tilde x_{k+1}) = 1. 
\end{align*}
\end{proof}
\begin{lemma} \label{lem:Ordering-Of-Values}
At any iteration $k\geq0$ of Algorithm \ref{alg:Radial-Subgradient-Method}, the following ordering holds
$$f^* \leq f(x_k) \leq z_{k} < 0.$$
\end{lemma}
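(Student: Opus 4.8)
The plan is to verify the three inequalities $f^* \le f(x_k)$, $f(x_k)\le z_k$, and $z_k<0$ separately, with the left one immediate, the right one following by a short induction, and the middle one being the crux. The leftmost inequality holds purely by definition, since $f^* = \inf\{f(x)\}$ is a lower bound for every function value, in particular for $f(x_k)$.

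For the rightmost inequality $z_k<0$, I would induct on $k$. The base case is exactly the initialization $z_0 = f(x_0) = f(\vec 0) < 0$, which holds by our normalizing assumption. For the inductive step, the update rule gives $z_{k+1} = z_k/\gamma_{z_k}(\tilde x_{k+1})$. The radial reformulation is nonnegative (an infimum over positive $\gamma$) and finite (since $f^p(\tilde x_{k+1},\gamma)\to-\infty$ as $\gamma\to\infty$), and because iteration $k$ did not terminate we have $\gamma_{z_k}(\tilde x_{k+1})\neq 0$, hence $\gamma_{z_k}(\tilde x_{k+1})>0$ (the terminating case in Algorithm \ref{alg:Radial-Subgradient-Method} is exactly $\gamma_{z_k}(\tilde x_{k+1})=0$). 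Dividing the negative $z_k$ by a positive number preserves the sign, so $z_{k+1}<0$.

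The middle inequality $f(x_k)\le z_k$ is the substantive part. By Lemma \ref{lem:Identity-For-Radial-Reformulation} we have $\gamma_{z_k}(x_k)=1$, and by Lemma \ref{lem:Lambda-To-Radial-Reformulation} this is equivalent to $\lambda(x_k,1,z_k)=0$. Recalling from the proof of Lemma \ref{lem:Lambda-To-Radial-Reformulation} that $\lambda(x_k,1,z_k)=\sup\{\lambda \mid (x_k,1,z_k)-\lambda(\vec 0,1,0)\in\mathcal{K}\}$, I would argue that this supremum is attained: the set of feasible $\lambda$ is the preimage of the closed cone $\mathcal{K}$ under a continuous affine map, hence closed, so it contains its finite supremum $0$. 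Consequently $(x_k,1,z_k)\in\mathcal{K}$, and since the restriction of $\mathcal{K}$ to $s=1$ is precisely $\mathrm{epi}\,f$, we conclude $(x_k,z_k)\in\mathrm{epi}\,f$, i.e.\ $f(x_k)\le z_k$.

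The main obstacle is precisely this attainment step: because $\gamma_{z_k}(\cdot)$ is defined through an infimum, knowing $\gamma_{z_k}(x_k)=1$ does not by itself guarantee that the value $\gamma=1$ satisfies $f^p(x_k,1)=f(x_k)\le z_k$; one needs the sublevel set $\{\gamma>0 \mid f^p(x_k,\gamma)\le z_k\}$ to be closed so that the infimum is attained. I would secure this from the closedness of $\mathcal{K}$ (which holds by construction, as $\mathcal{K}$ is defined as a closure), routed through the $\lambda$ identity as above. Alternatively, one could argue directly that the perspective $f^p$ is lower-semicontinuous for $\gamma>0$ whenever $f$ is lower-semicontinuous, which makes the sublevel set closed and again places $\gamma=1$ in it, yielding $f(x_k)\le z_k$.
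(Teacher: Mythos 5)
Your proof is correct, and its essential content matches the paper's: all three parts hinge on the same facts the paper uses, and the one genuinely delicate point --- that knowing $\gamma_{z_k}(x_k)=1$ (or $\gamma_{z_k}(\tilde x_{k+1})=\bar\gamma$) does not by itself place $\gamma$ in the defining sublevel set unless the infimum is attained --- is exactly the point the paper also has to address. The difference is in routing. The paper proves $f(x_k)\leq z_k$ by induction on the update rule: lower-semicontinuity of $f^p$ gives $f^p(\tilde x_{k+1},\gamma_{z_k}(\tilde x_{k+1}))\leq z_k$, and dividing by $\gamma_{z_k}(\tilde x_{k+1})$ yields $f(x_{k+1})\leq z_{k+1}$ directly, with no appeal to Lemma \ref{lem:Identity-For-Radial-Reformulation} or to the cone $\mathcal{K}$. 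You instead invoke Lemma \ref{lem:Identity-For-Radial-Reformulation} to get $\gamma_{z_k}(x_k)=1$ and then argue attainment at $x_k$ itself, either through closedness of $\mathcal{K}$ and the $\lambda$ identity or through lower-semicontinuity of $f^p$; your second, ``alternative'' argument is essentially the paper's argument applied at $x_k$ rather than at $\tilde x_{k+1}$. Both are valid and non-circular (Lemma \ref{lem:Identity-For-Radial-Reformulation} does not depend on this lemma); the paper's version is slightly leaner since it needs only the lsc of $f^p$ and the update formula, while yours buys a direct, non-inductive statement at the cost of leaning on Lemma \ref{lem:Identity-For-Radial-Reformulation} (itself proved by induction) and, in the primary route, on the closedness of $\mathcal{K}$ and the claim that its slice at $s=1$ is exactly $\mathrm{epi}\,f$. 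Your treatment of $z_k<0$ is the paper's argument with the helpful extra observation that non-termination is what guarantees $\gamma_{z_k}(\tilde x_{k+1})>0$.
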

\begin{proof}
The first inequality follows from $f^*$ being the minimum value of $f$.
The second inequality is trivially true when $k=0$. From the lower-semicontinuity of $f^p(x,\gamma)$, we know $f^p(\tilde x_{k+1}, \gamma_{z_{k}}(\tilde x_{k+1})) \leq z_{k}$. Then we have the second inequality in general since $f(x_{k+1}) = f(\tilde x_{k+1}/\gamma_{z_{k}}(\tilde x_{k+1})) \leq z_{k}/\gamma_{z_{k}}(\tilde x_{k+1}) = z_{k+1}$.
The third inequality follows inductively since $z_{0}<0$ and $z_{k+1} = z_{k}/\gamma_{z_{k}}(\tilde x_{k+1})<0$.
\end{proof}

The traditional analysis of subgradient descent, assuming Lipschitz continuity, is based on an elementary inequality, which is stated in the following lemma. 
\begin{lemma} \label{lem:Traditional-Subgradient-Method-Inequality}
Consider any convex function $g:\R^n\to\R$, $x, y\in\R^n$, and $\zeta\in\partial g(x)$. Then for any $\alpha>0$,
$$\norm{(x - \alpha \zeta) - y}^2 \leq \norm{x-y}^2 - 2\alpha (g(x) - g(y)) + \alpha^2\norm{\zeta}^2.$$
\end{lemma}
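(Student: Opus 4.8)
The plan is to reduce this to the single defining property of a subgradient together with one expansion of a squared norm. Since $g$ takes values in $\R$ (so there are no infinities to worry about) and $\zeta\in\partial g(x)$, the subgradient inequality gives
$$g(y) \geq g(x) + \langle \zeta, y - x\rangle \quad\text{for the specific } y \text{ in the statement,}$$
and this is the only nontrivial ingredient needed.

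First I would expand the left-hand side by writing $(x-\alpha\zeta)-y = (x-y) - \alpha\zeta$ and using the identity $\norm{a-b}^2 = \norm{a}^2 - 2\langle a, b\rangle + \norm{b}^2$, which yields
$$\norm{(x-\alpha\zeta)-y}^2 = \norm{x-y}^2 - 2\alpha\langle \zeta, x-y\rangle + \alpha^2\norm{\zeta}^2.$$
Comparing this with the claimed bound, the $\norm{x-y}^2$ and $\alpha^2\norm{\zeta}^2$ terms already match, so the entire inequality comes down to showing that the cross terms satisfy $-2\alpha\langle \zeta, x-y\rangle \leq -2\alpha(g(x)-g(y))$.

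Since $\alpha>0$, this last inequality is equivalent to $\langle \zeta, x-y\rangle \geq g(x) - g(y)$, which is just the subgradient inequality $g(y) \geq g(x) + \langle \zeta, y-x\rangle$ rearranged (note $\langle \zeta, x-y\rangle = -\langle \zeta, y-x\rangle$). Substituting this bound back into the expansion completes the argument.

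There is no real obstacle here: the result is the classical workhorse inequality of subgradient analysis, and the only substantive step is invoking the definition of $\zeta\in\partial g(x)$; everything else is the expansion of a square and a sign bookkeeping that relies on $\alpha>0$. The one point worth stating explicitly is that we only need the subgradient inequality at the particular pair $(x,y)$ appearing in the lemma, not uniformly, so no additional regularity or boundedness of $g$ is required.
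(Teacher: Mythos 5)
Your proof is correct and is essentially identical to the paper's: both expand $\norm{(x-\alpha\zeta)-y}^2$ and then bound the cross term $-2\alpha\langle\zeta, x-y\rangle$ by $-2\alpha(g(x)-g(y))$ via the subgradient inequality, using $\alpha>0$ to preserve the inequality's direction. No differences worth noting.
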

\begin{proof}
Follows directly from applying the subgradient inequality, $g(y)\geq g(x) + \langle\zeta, y-x \rangle$:
\begin{align*}
\norm{(x - \alpha \zeta) - y}^2 &= \norm{x-y}^2 - 2\alpha \langle\zeta, x-y \rangle + \alpha^2\norm{\zeta}^2 \\
& \leq \norm{x-y}^2 - 2\alpha (g(x) - g(y)) + \alpha^2\norm{\zeta}^2. 
\end{align*}
\end{proof}

The core of proving the traditional subgradient descent bounds is to inductively apply this lemma at each iteration. However, such an approach cannot be applied directly to Algorithm \ref{alg:Radial-Subgradient-Method} since the iterates are rescaled every iteration and the underlying function changes every iteration. The key to proving our convergence bounds is setting up a modified inequality that can be applied inductively. This is done in the following lemma.

\begin{lemma}\label{lem:Modified-Subgradient-Inequality}
Consider any $y$ with $f(y)<0$. Then at any iteration $k\geq0$ of Algorithm \ref{alg:Radial-Subgradient-Method},
$$\norm{\frac{f(y)}{z_{k+1}}x_{k+1} - y}^2 \leq \norm{\frac{f(y)}{z_{k}}x_{k} - y}^2 - 2\alpha_k\frac{f(y)}{z_{k}}\frac{z_{k}-f(y)}{0-z_{k}} + \alpha_k^2 \left(\frac{f(y)}{z_{k}}\right)^2\norm{\zeta_k}^2.$$
\end{lemma}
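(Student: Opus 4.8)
The plan is to reduce the claim to a single application of the traditional subgradient inequality (Lemma~\ref{lem:Traditional-Subgradient-Method-Inequality}) for the convex function $\gamma_{z_{k}}(\cdot)$, after a change of scale that makes the per-iteration radial rescaling cancel. Throughout, write $c = f(y)/z_{k}$; since $f(y)<0$ by assumption and $z_{k}<0$ by Lemma~\ref{lem:Ordering-Of-Values}, we have $c>0$ and $1/c = z_{k}/f(y) > 0$. I will also use that $\alpha_k>0$ and, implicitly, that the iteration has not terminated, so $\gamma_{z_{k}}(\tilde x_{k+1})>0$ and $(x_{k+1},z_{k+1})$ is well defined.

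First I would exploit the cancellation built into the radial update. Because $x_{k+1} = \tilde x_{k+1}/\gamma_{z_{k}}(\tilde x_{k+1})$ and $z_{k+1} = z_{k}/\gamma_{z_{k}}(\tilde x_{k+1})$ share the common factor $\gamma_{z_{k}}(\tilde x_{k+1})$, it cancels in the product, yielding $\frac{f(y)}{z_{k+1}}x_{k+1} = \frac{f(y)}{z_{k}}\tilde x_{k+1} = c\,(x_k - \alpha_k\zeta_k)$. Hence the left-hand side of the claim equals $\norm{c(x_k-\alpha_k\zeta_k)-y}^2 = c^2\norm{(x_k-\alpha_k\zeta_k)-p}^2$, where $p := \tfrac1c y = \tfrac{z_k}{f(y)}y$ and I have used $y = cp$. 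This displays the quantity as an honest subgradient step for $\gamma_{z_k}$ from $x_k$, compared against the fixed target $p$.

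Next I would evaluate $\gamma_{z_k}$ at the two relevant points. Lemma~\ref{lem:Identity-For-Radial-Reformulation} gives $\gamma_{z_k}(x_k)=1$, and a direct computation gives $f^p(p,1/c)=\tfrac1c f(cp)=\tfrac1c f(y)=z_k$; since $f^p$ is strictly decreasing in its second argument, this pins down $\gamma_{z_k}(p)=1/c=z_k/f(y)$. Now applying Lemma~\ref{lem:Traditional-Subgradient-Method-Inequality} with $g=\gamma_{z_k}$, point $x_k$, target $p$, subgradient $\zeta_k$, and step $\alpha_k>0$, and then multiplying through by $c^2>0$, the left-hand side becomes exactly $\norm{\frac{f(y)}{z_{k+1}}x_{k+1}-y}^2$, the leading term becomes $c^2\norm{x_k-p}^2=\norm{\frac{f(y)}{z_k}x_k-y}^2$, the gradient term becomes $\alpha_k^2 c^2\norm{\zeta_k}^2$, and the cross term becomes $-2\alpha_k c^2\bigl(1-\tfrac{z_k}{f(y)}\bigr)=-2\alpha_k\frac{f(y)}{z_k}\frac{z_k-f(y)}{0-z_k}$, using $\frac{z_k-f(y)}{0-z_k}=\frac{f(y)-z_k}{z_k}$. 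Matching the three terms gives the claim.

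I expect the main obstacle to be conceptual rather than computational: recognizing that rescaling the iterate by the corrected scalar $f(y)/z_k$ is precisely what annihilates the radial rescaling factor, and correspondingly choosing the comparison point $p=\frac{z_k}{f(y)}y$, the unique positive rescaling of $y$ whose $\gamma_{z_k}$-value is explicit. Once these two choices are identified, the statement collapses onto Lemma~\ref{lem:Traditional-Subgradient-Method-Inequality}, with the positivity of $c$ and $\alpha_k$ guaranteeing that neither the multiplication by $c^2$ nor the application of the subgradient inequality reverses any direction.
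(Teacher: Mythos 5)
Your proof is correct and follows essentially the same route as the paper: both apply Lemma~\ref{lem:Traditional-Subgradient-Method-Inequality} to $\gamma_{z_k}$ at $x_k$ with comparison point $\frac{z_k}{f(y)}y$, evaluate $\gamma_{z_k}$ there via the perspective-function identity, invoke $\gamma_{z_k}(x_k)=1$, scale by $(f(y)/z_k)^2$, and use the cancellation $\frac{f(y)}{z_{k+1}}x_{k+1}=\frac{f(y)}{z_k}\tilde x_{k+1}$. The only difference is presentational (you start from the left-hand side and work backward), and your algebra for the cross term checks out.
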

\begin{proof}
Applying Lemma \ref{lem:Traditional-Subgradient-Method-Inequality} on $\gamma_{z_{k}}(\cdot)$ with $x_k$ and $\dfrac{z_{k}}{f(y)}y$ implies
\begin{equation}
\norm{\tilde x_{k+1} - \frac{z_{k}}{f(y)}y}^2 \leq \norm{x_{k} - \frac{z_{k}}{f(y)}y}^2 - 2\alpha_k\left(\gamma_{z_{k}}(x_k) - \gamma_{z_{k}}\left(\frac{z_{k}}{f(y)}y\right)\right) + \alpha_k^2\norm{\zeta_k}^2.
\end{equation}
The value of $\gamma_{z_{k}}\left(\dfrac{z_{k}}{f(y)}y\right)$ can be derived directly. Observe that
$$f^p\left(\frac{z_{k}}{f(y)}y, \frac{z_{k}}{f(y)}\right) = \frac{z_{k}}{f(y)} f(y) =z_{k}.$$
Then $\gamma_{z_{k}}\left(\dfrac{z_{k}}{f(y)}y\right) = \dfrac{z_{k}}{f(y)}$ since $f^p$ is strictly decreasing in $\gamma$.
Combining this with Lemma \ref{lem:Identity-For-Radial-Reformulation} allows us to restate our inequality as
\begin{equation}
\norm{\tilde x_{k+1} - \frac{z_{k}}{f(y)}y}^2 \leq \norm{x_{k} - \frac{z_{k}}{f(y)}y}^2 -2\alpha_k\left(1 - \frac{z_{k}}{f(y)}\right) + \alpha_k^2\norm{\zeta_k}^2.
\end{equation}
Multiplying through by $(f(y)/z_{k})^2$ yields
\begin{equation}
\norm{\frac{f(y)}{z_{k}}\tilde x_{k+1} - y}^2 \leq \norm{\frac{f(y)}{z_{k}}x_{k} - y}^2 -2\alpha_k\frac{f(y)}{z_{k}}\frac{z_{k} - f(y)}{0-z_{k}} + \alpha_k^2\left(\frac{f(y)}{z_{k}}\right)^2\norm{\zeta_k}^2.
\end{equation}
Noting that $\tilde x_{k+1} = \dfrac{z_{k}}{z_{k+1}}x_{k+1}$ completes the proof.
\end{proof}

\subsection{Proof of Theorem \ref{thm:Convergence-In-General}} \label{subsec:Proof-In-General}
We assume that for all $i\leq k$, we have $\gamma_{z_{i}}(\tilde x_{i+1})>0$ (otherwise the theorem immediately holds by Proposition \ref{prop:Minimum-Of-Radial-Reformulation}). Then the first $k$ iterates of Algorithm \ref{alg:Radial-Subgradient-Method} are well-defined.
Consider any $\hat{x}\in\hat{X}$. Inductively applying Lemma \ref{lem:Modified-Subgradient-Inequality} with $y=\hat{x}$ produces
\begin{equation}
\norm{\frac{\hat{f}}{z_{k+1}}x_{k+1} - \hat{x}}^2 \leq \norm{\frac{\hat{f}}{z_{0}}x_{0} - \hat{x}}^2 -\sum^k_{i=0}\left( 2\alpha_i\frac{\hat{f}}{z_{i}}\frac{z_{i}-\hat{f}}{0-z_{i}} - \alpha_i^2 \left(\frac{\hat{f}}{z_{i}}\right)^2 \norm{\zeta_i}^2\right).
\end{equation}
Noting that $x_0 = \vec 0$, this implies
\begin{equation}
2\sum^k_{i=0}\alpha_i\frac{\hat{f}}{z_{i}}\frac{z_{i}-\hat{f}}{0-z_{i}}  \leq \norm{\hat{x}}^2 + \sum^k_{i=0} \alpha_i^2 \left(\frac{\hat{f}}{z_{i}}\right)^2 \norm{\zeta_i}^2.
\end{equation}
Minimizing over all $\hat{x}\in\hat{X}$, we have 
\begin{equation} \label{eq:Summed-Up-Inequality}
2\sum^k_{i=0}\alpha_i\frac{\hat{f}}{z_{i}}\frac{z_{i}-\hat{f}}{0-z_{i}}  \leq \textrm{dist}(x_0, \hat{X})^2 + \sum^k_{i=0} \alpha_i^2 \left(\frac{\hat{f}}{z_{i}}\right)^2 \norm{\zeta_i}^2.
\end{equation}
From Proposition \ref{prop:Lipschitz-Of-Radial-Reformulation}, we have $\norm{\zeta_i}\leq 1/R$. Then rearrangement of this inequality gives
\begin{equation}
\min_{i\leq k}\left\{\frac{z_{i}-\hat{f}}{0-z_{i}}\right\} \leq \frac{\textrm{dist}(x_0, \hat{X})^2 + \frac{1}{R^2}\sum^k_{i=0} \alpha_i^2 \left(\frac{\hat{f}}{z_{i}}\right)^2}{2\sum^k_{i=0}\alpha_i\frac{\hat{f}}{z_{i}}}.
\end{equation}
Then Theorem \ref{thm:Convergence-In-General} follows from the fact that $ f(x_i) \leq z_{i} < 0$, as shown in Lemma \ref{lem:Ordering-Of-Values}.

\subsection{Proof of Corollary \ref{cor:Convergence-Square-Summable}} \label{subsec:Proof-Square-Summable-Corrollary}
Corollary \ref{cor:Convergence-Square-Summable} follows directly from Theorem \ref{thm:Convergence-In-General}. By Proposition \ref{prop:Minimum-Of-Radial-Reformulation}, the algorithm will correctly report unbounded objective if it ever encounters $\gamma_{z_{i}}(\tilde x_{i+1})=0$ (and thus the corollary holds). So we assume this never occurs, which implies the sequence of iterates $\{x_i\}$ is well-defined.
By selecting $\alpha_i=-z_{i}\beta_i$, the upper bound in Theorem \ref{thm:Convergence-In-General} converges to zero:
\begin{equation*}
\lim_{k\to\infty} \frac{\textrm{dist}(x_0, \hat{X})^2 + \frac{1}{R^2}\sum^k_{i=0} \alpha_i^2 \left(\frac{\hat{f}}{z_{i}}\right)^2}{2\sum^k_{i=0}\alpha_i\frac{\hat{f}}{z_{i}}} = \lim_{k\to\infty} \frac{\textrm{dist}(x_0, \hat{X})^2 + \hat{f}^2\frac{1}{R^2}\sum^k_{i=0} \beta_i^2}{-2\hat{f}\sum^k_{i=0}\beta_i} = 0.
\end{equation*}
Then we have the following convergence result
$$ \lim_{k\to\infty}\min_{i\leq k}\left\{\frac{f(x_i)-\hat{f}}{0-f(x_i)}\right\}\leq 0.$$
This implies $\lim\limits_{k\to\infty}\min\limits_{i\leq k}\left\{f(x_i)\right\}\leq \hat{f}$. Considering a sequence of $\hat{f}$ approaching $f^*$ gives the desired result.

\subsection{Proof of Theorem \ref{thm:Convergence-When-Optimal-Unknown}} \label{subsec:Proof-When-Optimal-Unknown} 
Let $k = \left\lceil \frac{4}{3}  \frac{  \mathrm{dist}(x_0, \hat{X})^2}{R^2} \frac{1}{\epsilon^2}  \right\rceil$. We assume $\gamma_{z_{i}}(\tilde x_{i+1})>0$ for all $i\leq k$ (otherwise the theorem immediately holds by Proposition \ref{prop:Minimum-Of-Radial-Reformulation}). Then the first $k$ iterates of Algorithm \ref{alg:Radial-Subgradient-Method} are well-defined.
Combining (\ref{eq:Summed-Up-Inequality}) with our choice of step size $\alpha_i = \dfrac{\epsilon}{2\norm{\zeta_i}^2}$ yields
\begin{equation}
\sum^k_{i=0}\frac{\epsilon}{\norm{\zeta_i}^2}\frac{\hat{f}}{z_{i}}\frac{z_{i}-\hat{f}}{0-z_{i}}  \leq \textrm{dist}(x_0, \hat{X})^2 + \sum^k_{i=0}\left(\frac{\hat{f}}{z_{i}}\right)^2 \dfrac{\epsilon^2}{4\norm{\zeta_i}^2}
\end{equation}
\begin{equation}
\implies \sum^k_{i=0}\frac{\epsilon}{\norm{\zeta_i}^2}\left(\frac{\hat{f}}{z_{i}}\right)^2\left(\frac{z_{i}-\hat{f}}{0-\hat{f}} - \frac{\epsilon}{4}\right) \leq \mathrm{dist}(x_0, \hat{X})^2
\end{equation}
\begin{equation}
\implies \epsilon (k+1) \min_{i\leq k}\left\{\frac{1}{\norm{\zeta_i}^2}\left(\frac{\hat{f}}{z_{i}}\right)^2\left(\frac{z_{i}-\hat{f}}{0-\hat{f}} - \frac{\epsilon}{4}\right)\right\} \leq \mathrm{dist}(x_0, \hat{X})^2.
\end{equation}
If any $i\leq k$ has $\hat{f}>z_i$, the theorem holds (as this would imply $f(x_i) - \hat{f} \leq z_{i} -\hat{f}<0$). So we now assume $\hat{f} / z_{i} \geq 1$. Then, noting that $\norm{\zeta_i}\leq 1/R$, we can simplify our inequality to
\begin{equation}
\min_{i\leq k}\left\{\frac{z_{i}-\hat{f}}{0-\hat{f}} - \frac{\epsilon}{4}\right\} \leq \frac{\mathrm{dist}(x_0, \hat{X})^2}{\epsilon R^2 (k+1)}.
\end{equation}
Since $f(x_{i}) \leq z_{i}$ from Lemma \ref{lem:Ordering-Of-Values}, we have the following, completing our proof of Theorem \ref{thm:Convergence-When-Optimal-Unknown},
\begin{equation}
\min_{i\leq k}\left\{ \frac{f(x_i) - \hat{f}}{0-\hat{f}} \right\} \leq \frac{\mathrm{dist}(x_0, \hat{X})^2}{\epsilon R^2(k+1)} + \frac{\epsilon}{4}.
\end{equation}

\subsection{Proof of Theorem \ref{thm:Convergence-When-Optimal-Known}} \label{subsec:Proof-When-Optimal-Known}
Since we assume $f^*$ is finite, we know all $\gamma_{z_{i}}(\tilde x_{i+1})\geq z_{i}/f^* >0$ (by Proposition \ref{prop:Minimum-Of-Radial-Reformulation}). Thus the sequence of iterates $\{x_i\}$ is well-defined.
Consider any $k\geq 0$. Then taking (\ref{eq:Summed-Up-Inequality}) with $\hat{f}=f^*$ gives
\begin{equation}
2\sum^k_{i=0}\alpha_i\frac{f^*}{z_{i}}\frac{z_{i}-f^*}{0-z_{i}} \leq \mathrm{dist}(x_0, X^*)^2 + \sum^k_{i=0} \alpha_i^2 \left(\frac{f^*}{z_{i}}\right)^2 \norm{\zeta_i}^2.
\end{equation}
Combining this with our choice of step size $\alpha_i = \dfrac{z_{i} - f^*}{0-f^*} \dfrac{1}{\norm{\zeta_i}^2}$ yields
\begin{equation}
2\sum^k_{i=0} \frac{1}{\norm{\zeta_i}^2}\left(\frac{z_{i}-f^*}{0-z_{i}}\right)^2 \leq \mathrm{dist}(x_0, X^*)^2 + \sum^k_{i=0} \frac{1}{\norm{\zeta_i}^2}\left(\frac{z_{i}-f^*}{0-z_{i}}\right)^2
\end{equation}
\begin{equation}
\implies \sum^k_{i=0} \frac{1}{\norm{\zeta_i}^2}\left(\frac{z_{i}-f^*}{0-z_{i}}\right)^2 \leq \mathrm{dist}(x_0, X^*)^2
\end{equation}
\begin{equation}
\implies (k+1)\min_{i\leq k}\left\{\left( \frac{1}{\norm{\zeta_i}^2}\frac{z_{i}-f^*}{0-z_{i}}\right)^2\right\} \leq \mathrm{dist}(x_0, X^*)^2.
\end{equation}
From Proposition \ref{prop:Lipschitz-Of-Radial-Reformulation}, we know $\norm{\zeta_i} \leq 1/R$, and thus
\begin{equation}
\min_{i\leq k}\left\{\left(\frac{z_{i}-f^*}{0-z_{i}}\right)^2\right\} \leq \frac{\mathrm{dist}(x_0, X^*)^2}{R^2(k+1)}.
\end{equation}
From Lemma \ref{lem:Ordering-Of-Values}, we have $f^* \leq f(x_i)\leq z_{i}$, which completes our proof by implying
\begin{equation}
\min_{i\leq k}\left\{\frac{f(x_i) - f^*}{0-f^*}\right\} \leq \frac{\mathrm{dist}(x_0, X^*)}{R\sqrt{k+1}}.
\end{equation}

{\bf Acknowledgments.} The author wishes to express his deepest gratitude to Jim Renegar for numerous insightful discussions helping motivate this work, and for advising on both the presentation and positioning of this paper.

\bibliographystyle{siamplain}
\bibliography{references}

\end{document}